\theoremstyle{plain}
\newtheorem{proposition}{Proposition}
\newtheorem{lemma}[proposition]{Lemma}
\newtheorem{theorem}[proposition]{Theorem}
\newtheorem{corollary}[proposition]{Corollary}
\newtheorem{conjecture}[proposition]{Conjecture}
\newtheorem{question}[proposition]{Question}
\newtheorem{problem}[proposition]{Problem}
\theoremstyle{definition}
\newtheorem*{ack}{Acknowledgement}
\theoremstyle{remark}
\def\Z{\mathbb{Z}}
\def\NN{\mathbb{N}}
\def\P{\mathcal{P}}
\def\N{\mathcal{N}}
\def\G{\mathcal{G}}
\def\F{\mathcal{F}}
\def\R{\mathcal{R}}
\def\E{\mathcal{E}}
\def\A{\mathcal{A}}
\def\B{\mathcal{B}}
\begin{document}

\title{Variants of Wythoff's game translating its $\P$-positions}

\author{Nhan Bao Ho}
\address{Department of Mathematics, La Trobe University, Melbourne, Australia 3086}
\email{nbho@students.latrobe.edu.au, honhanbao@yahoo.com}

\begin{abstract}
We introduce a restriction of Wythoff's game, which we call $\F$-\emph{Wythoff}, in which the integer ratio of entries must not change if an equal number of tokens are removed from both piles. We show that $\P$-positions of $\F$-Wythoff are exactly those positions obtained from $\P$-positions of Wythoff's game by adding 1 to each entry. We describe the distribution of Sprague-Grundy values and, in particular, generalize two properties on the distribution of those positions which have Sprague-Grundy value $k$, for a given $k$, for variants of Wythoff's game. We analyze the  mis\`{e}re $\F$-Wythoff and show that the normal and  mis\`{e}re versions differ exactly on those positions which have Sprague-Grundy values 0, and 1 via a swap. We examine two further variants of $\F$-Wythoff, one restriction and one extension, preserving its $\P$-positions. We raise two general questions based on the translation phenomenon of the $\P$-positions.
\end{abstract}


\maketitle

\section{Introduction}

Introduced by Willem Abraham Wythoff \cite{Wyt}, Wythoff's game is a variant of Nim played on two piles of tokens. Two players move alternately, either removing a number of tokens from one pile or removing an equal number of tokens from both piles. The player first unable to move (because the two piles become empty) loses and his/her opponent wins. We denote by $(a,b)$ the position of the two piles of sizes $a$, $b$. Symmetrically, $(a,b)$ is identical to $(b,a)$. A position is called an \emph{$\N$-position} (known as \emph{winning position}) if the player about to move from there has a strategy to win. Otherwise, we have a \emph{$\P$-position} (known as \emph{losing position}). Here, $\N$ stands for the $\N$ext player and $\P$ stands for the $\P$revious player. Wythoff \cite{Wyt} shows that the $\P$-positions of Wythoff's game form the set $\{(\lfloor \phi n \rfloor, \lfloor \phi^2 n \rfloor) | n \geq 0\}$ where $\phi = (1+\sqrt{5})/2$ is the golden ratio and $\lfloor . \rfloor$ denotes the integer part.

Recall that Wythoff's game is an impartial combinatorial game. If there exists a move from a position $p$ to some position $q$, then the position $q$ is called a \emph{follower} of $p$. For a finite set $S$ of nonnegative integers, the \emph{minimum excluded number} of $S$, denoted by $mex(S)$, is the smallest nonnegative integer not in $S$. The \emph{Sprague-Grundy function} for an impartial combinatorial game is the function $\mathcal{G}$ from the set of its positions into the nonnegative integers, defined inductively by
\[\mathcal{G}(p) = mex\{\mathcal{G}(q)| q \text{ is a follower of } p\}\]
with $mex\{\} = 0$. The value $\mathcal{G}(p)$ is called the \emph{Sprague-Grundy value} at $p$. The base theory of combinatorial games and the Sprague-Grundy function can be found in \cite{ww1}. Recall that a position is a $\P$-position if and only if it has Sprague-Grundy value 0 \cite{ww1}. One also can prove the following fundamental property of Sprague-Grundy values: a position $p$ has Sprague-Grundy value $k > 0$ if and only if (i) $\G(p) \neq \G(q)$ if there exists a move from either $p$ to $q$ or $q$ to $p$, and (ii) for every $l < k$, there exists a position $q$ such that $\G(q) = l$ and one can move from $p$ to $q$.

Despite being one of oldest impartial games, Wythoff's game is still a highly interesting topic. The Sprague-Grundy function for this game is studied widely in \cite{blass, Dress, landman, nivasch}. Several variants of Wythoff's game have been examined, including (i) {\em restrictions}: obtained from Wythoff's game by eliminating some moves \cite{Gen-Connell, Nim-Wythoff, ho}, and (ii) {\em extensions}: obtained from Wythoff's game by  adding extra moves \cite{Heapgame, Howtobeat, Gen-Fra, Adjoining, RRR0, RRR, ho, hog, Some-Gen}. Solving the winning strategy for variants of Wythoff's game is always an interesting exercise. In particular, it has been showed that there exist variants of Wythoff's game, including restrictions and extensions, preserving its $\P$-positions \cite{Ext-Res, ho}. This paper makes further investigations of variants of Wythoff's game whose $\P$-positions are slightly different to those of Wythoff's game.

In this paper, we introduce a restriction of Wythoff's game, called \emph{$\F$-Wythoff}, in which a legal move is either of the following two types:
\begin{itemize}
\item [(i)] removing any number of tokens from one pile;
\item [(ii)] removing an equal number of tokens from two piles provided that the integer ratio of the two entries does not change.
\end{itemize}
We obtain the following result on $\P$-positions: the $\P$-positions of $\F$-Wythoff are those positions obtained directly from $\P$-positions of Wythoff's game by adding 1 to each entry. This translation phenomenon is the main theme of this paper. We also establish several results for further modifications of $\F$-Wythoff.

The paper is organized as follows. In the next section, we solve the $\P$-positions in both algebraic and recursive characterizations before giving formulas for those positions which have Sprague-Grundy values 1 and 2. Section 3 analyzes the distributions of Sprague-Grundy values for $\F$-Wythoff on the 2-dimension array whose $(i,j)$ entry is the Sprague-Grungdy value of the position $(i,j)$. In particular, we generalize two results for the Sprague-Grundy values of Wythoff's game and its variants. In Section 4, we examine $\F$-Wythoff in mis\`{e}re play. We show that $\F$-Wythoff and its mis\`{e}re form differ slightly on the set of positions which have Sprague-Grundy values 0 and 1. Section 5 answers the question as to whether there exists a variant of $\F$-Wythoff preserving its $\P$-positions. Two such variants, one restriction and an extension, are discussed. In the final section, we raise two questions for variants of Wythoff's game, based on the theme of the paper. This paper is the continuation of our investigations on variants of Wythoff's game \cite{ho} and, more generally, 2-pile variants of the game of Nim \cite{MEuclid, Min, CHL}.

\section{Translation phenomenon on those positions which have Sprague-Grundy value 0, 1 and 2}

This section first solves the $\P$-positions in $\F$-Wythoff. We then give formulas for those positions which have Sprague-Grundy values 1 and 2. It will be shown that these positions are all obtained from $\P$-positions of Wythoff's game by a translation, except for some initial positions.


Let $\phi = (1+\sqrt{5})/2$. Then $\phi^2 = \phi + 1$. Therefore, for every positive integer $n$, we have
\[\lfloor \phi^2n \rfloor = \lfloor \phi n + n \rfloor = \lfloor \phi n \rfloor + n.\]

\smallskip
\begin{lemma} \label{Comp} \cite{beatty1}
For each $i \geq 1$, set $a_i = \lfloor \phi i \rfloor$ and $b_i = \lfloor \phi^2 i \rfloor$. Then
\begin{align*}
& \{a_i | i \geq 1\} \cup \{b_i | i \geq 1\} = \NN,\\
& \{a_i | i \geq 1\} \cap \{b_i | i \geq 1\} = \emptyset,
\end{align*}
in which $\NN$ is the set of positive integers.
\end{lemma}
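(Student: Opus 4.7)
The plan is to derive the lemma as the special case $r = \phi$, $s = \phi^2$ of Beatty's theorem. The indispensable arithmetic identity is $1/\phi + 1/\phi^2 = 1$, which follows immediately from $\phi^2 = \phi + 1$ upon dividing by $\phi^2$. Throughout, I use that $\phi$ and $\phi^2$ are irrational, so $\phi k$ and $\phi^2 k$ are never integers for $k \geq 1$.

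For the disjointness assertion $\{a_i\} \cap \{b_i\} = \emptyset$, I would argue by contradiction. Suppose $\lfloor \phi i \rfloor = \lfloor \phi^2 j \rfloor = N$ for some positive integers $i$, $j$, $N$. Irrationality upgrades the floor conditions to the strict inequalities $N < \phi i < N+1$ and $N < \phi^2 j < N+1$. Dividing the first by $\phi$, the second by $\phi^2$, and adding gives
\[
N\left(\frac{1}{\phi} + \frac{1}{\phi^2}\right) < i + j < (N+1)\left(\frac{1}{\phi} + \frac{1}{\phi^2}\right),
\]
which by the identity above reduces to $N < i + j < N+1$, contradicting $i+j \in \Z$.

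For the covering assertion, I would fix $N \geq 1$ and count the elements of each sequence in $\{1, \ldots, N\}$. Since $\phi i$ is never an integer, $a_i \leq N$ is equivalent to $i < (N+1)/\phi$, so the number of qualifying $i \geq 1$ is $\lfloor (N+1)/\phi \rfloor$; the analogous count for the $b$-sequence is $\lfloor (N+1)/\phi^2 \rfloor$. Both quantities $(N+1)/\phi$ and $(N+1)/\phi^2$ are irrational but sum to the integer $N+1$, so their fractional parts sum to exactly $1$, whence
\[
\lfloor (N+1)/\phi \rfloor + \lfloor (N+1)/\phi^2 \rfloor = (N+1) - 1 = N.
\]
Combined with disjointness, the two sequences contribute exactly $N$ distinct positive integers in $\{1, \ldots, N\}$, so they must exhaust this set; letting $N \to \infty$ gives $\{a_i\} \cup \{b_i\} = \NN$.

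No substantial obstacle is anticipated here: this is the textbook proof of Beatty's theorem, and the only subtleties are the irrationality upgrade from $\leq$ to $<$ in the floor inequalities, and the observation that two irrational reals with integer sum have fractional parts summing to $1$.
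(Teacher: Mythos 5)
Your proof is correct: both halves (the disjointness argument via $1/\phi + 1/\phi^2 = 1$ and the counting argument showing exactly $N$ of the values land in $\{1,\dots,N\}$) are the standard proof of Beatty's theorem, and the irrationality points you flag are exactly the ones that need care. Note, however, that the paper does not prove this lemma at all --- it is quoted with a citation to the 1927 Beatty--Ostrowski solution --- so there is no in-paper argument to compare against; your write-up would serve as a complete self-contained substitute (the only unstated triviality is that each sequence is strictly increasing, since $\phi,\phi^2>1$, so the index counts $\lfloor (N+1)/\phi\rfloor$ and $\lfloor (N+1)/\phi^2\rfloor$ really do count distinct values).
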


Consequently, we have

\smallskip
\begin{corollary} \label{Comp.1}
Let $a \geq 2$ be an integer. There exists exactly one $n$ such that either $a = \lfloor \phi n \rfloor+1$ or $a = \lfloor \phi^2 n \rfloor +1$. Moreover, the number $a$ cannot be of both forms.
\end{corollary}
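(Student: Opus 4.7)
The plan is to reduce the statement to a direct application of Lemma \ref{Comp} applied to the integer $a-1$. Since the hypothesis $a \geq 2$ guarantees that $a-1$ is a positive integer, we may appeal to Beatty's partition.

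First I would observe that by Lemma \ref{Comp}, the positive integer $a-1$ belongs to exactly one of the two disjoint sets $\{\lfloor \phi i \rfloor \mid i \geq 1\}$ and $\{\lfloor \phi^2 i \rfloor \mid i \geq 1\}$ whose union is $\NN$. Hence there exists an index $n$ such that either $a-1 = \lfloor \phi n \rfloor$ or $a-1 = \lfloor \phi^2 n \rfloor$, and only one of these two alternatives can occur. Adding $1$ to both sides produces the desired representation of $a$, while the disjointness clause in Lemma \ref{Comp} gives the ``cannot be of both forms'' part of the corollary.

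The only remaining point is uniqueness of the index $n$ within a single one of the two forms. For this I would note that since $\phi > 1$ (and hence $\phi^2 > 1$), the sequences $n \mapsto \lfloor \phi n \rfloor$ and $n \mapsto \lfloor \phi^2 n \rfloor$ are strictly increasing, so each value is attained by at most one $n$. Combined with the existence from Beatty's partition, this gives the unique $n$.

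I do not anticipate a serious obstacle: the corollary is essentially a cosmetic shift of Lemma \ref{Comp} by one. The only subtlety worth stating explicitly is why the hypothesis $a \geq 2$ is necessary, namely that Lemma \ref{Comp} is stated for indices $i \geq 1$, so one needs $a - 1 \geq 1$ in order to find the appropriate $n$.
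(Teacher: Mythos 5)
Your proof is correct and follows exactly the route the paper intends: the paper states the corollary as an immediate consequence of Lemma \ref{Comp} (with no written proof), and your argument---apply the Beatty partition to $a-1 \geq 1$, then note strict monotonicity of $n \mapsto \lfloor \phi n \rfloor$ and $n \mapsto \lfloor \phi^2 n \rfloor$ for uniqueness of the index---is precisely that deduction, spelled out. No gaps.
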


Recall that in Wythoff's game, we have $\P$-positions as follows.
\smallskip
\begin{theorem} \label{W-P} \cite{Wyt}
A position in Wythoff's game is a $\P$-position if and only if it is of the form $(\lfloor\phi n\rfloor, \lfloor \phi^2 n \rfloor)$ for some $n \geq 0$.
\end{theorem}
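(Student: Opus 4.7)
The plan is to verify that the set $S = \{(\lfloor \phi n \rfloor, \lfloor \phi^2 n \rfloor) : n \geq 0\}$ coincides with the set of $\P$-positions of Wythoff's game. Since the $\P$-positions of an impartial game are uniquely determined by the two closure conditions (i) no move from inside the candidate set stays inside it and (ii) from outside the set some move lands inside, it suffices to verify these two properties for $S$. The induction is on $a+b$ (the game is finite, so this is well-founded). The main tools are Lemma \ref{Comp} (the sequences $\{a_n\}_{n \geq 1}$ and $\{b_n\}_{n \geq 1}$ partition $\NN$) together with the identity $b_n - a_n = n$, which makes the differences of elements of $S$ pairwise distinct.

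For (i), fix $(a_n, b_n) \in S$. A single-pile move yields either $(a', b_n)$ with $a' < a_n$ or $(a_n, b')$ with $b' < b_n$. In the first case, if the result lay in $S$ it would equal some $(a_m, b_m)$ after ordering, forcing $b_n$ to appear as either an $a$-term or a $b$-term of the partition for some $m$; by Lemma \ref{Comp} this forces $m = n$, contradicting $a' < a_n$. The second case is symmetric. A two-pile move $(a_n - k, b_n - k)$ preserves the difference $n$; if it belonged to $S$ the new index would have to be $n$ itself, forcing $k = 0$.

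For (ii), let $(a, b) \notin S$ with $a \leq b$. The boundary cases $a = 0$ and $a = b$ admit a direct move to $(0,0) \in S$. Otherwise $1 \leq a < b$, and Lemma \ref{Comp} places $a$ in exactly one of the two sequences. If $a = a_n$ for some $n$, then $b \neq b_n$: when $b > b_n$ we reduce $b$ to $b_n$, and when $b < b_n$ we set $d = b - a < n$ and subtract $a - a_d$ from both piles to reach $(a_d, b_d) \in S$. This move is legal because $d < n$ gives $a_d < a_n = a$, hence also $b_d = a_d + d < a + (b - a) = b$. If instead $a = b_n$ for some $n$, then $a_n < a \leq b$, and reducing $b$ down to $a_n$ produces the unordered pair $\{a_n, b_n\} \in S$.

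The principal subtlety lies in the subcase $a = a_n$ with $a < b < b_n$ of part (ii): one must locate a legal common-subtraction move, and the correct auxiliary index is $d = b - a$, whose appropriateness is forced by the fact that $S$ is indexed by differences via $b_n - a_n = n$. Once this bookkeeping is in place, the remainder of the verification is routine, and the induction closes immediately.
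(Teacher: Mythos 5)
Your proof is correct. Note that the paper itself offers no proof of Theorem \ref{W-P}: it is quoted as Wythoff's classical result with a citation to \cite{Wyt}, and the surrounding arguments (e.g.\ Theorem \ref{FW-P}) simply use it as a black box. So there is nothing in the paper to compare against line by line; what you have supplied is the standard self-contained verification, and it holds up. The two properties you check are exactly the ones characterizing the $\P$-position set of a finite acyclic impartial game (your appeal to induction on $a+b$ is harmless but not really needed once (i) and (ii) are established), and the two key facts you lean on are the right ones: the Beatty partition of $\NN$ from Lemma \ref{Comp}, which rules out single-pile moves between members of the candidate set, and the identity $\lfloor\phi^2 n\rfloor-\lfloor\phi n\rfloor=n$, which makes the differences pairwise distinct and rules out diagonal moves. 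The delicate subcase $a=\lfloor\phi n\rfloor$, $a<b<\lfloor\phi^2 n\rfloor$ is handled correctly: with $d=b-a$ one has $1\le d<n$, hence $\lfloor\phi d\rfloor<\lfloor\phi n\rfloor=a$ by monotonicity, so subtracting $a-\lfloor\phi d\rfloor$ from both piles is a legal move landing on $(\lfloor\phi d\rfloor,\lfloor\phi^2 d\rfloor)$. The compressed spots (the ``symmetric'' single-pile case, and the degenerate index $n=0$, where $(0,0)$ has no moves at all) are easily filled in and do not hide any gap.
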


We first prove an equality that will be used many times in this paper.
\smallskip
\begin{lemma} \label{U-lem}
Let $n,k$, and $i$ be positive integers. We have
\[
\Bigg{\lfloor} \frac{\lfloor\phi^2 n\rfloor + k + i}{\lfloor\phi n\rfloor + k + i}\Bigg{\rfloor} = \Bigg{\lfloor} \frac{\lfloor\phi^2 n\rfloor + i}{\lfloor\phi n\rfloor + i} \Bigg{\rfloor} = 1.
\]
\end{lemma}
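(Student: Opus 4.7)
The plan is to reduce both fractions to the form $1 + (\text{small positive fraction})$ via the key identity $\lfloor \phi^2 n\rfloor = \lfloor \phi n\rfloor + n$, which was recorded just before the lemma. Substituting this identity into the numerator of each fraction yields
\[
\frac{\lfloor\phi^2 n\rfloor + i}{\lfloor\phi n\rfloor + i} = 1 + \frac{n}{\lfloor\phi n\rfloor + i}
\qquad\text{and}\qquad
\frac{\lfloor\phi^2 n\rfloor + k + i}{\lfloor\phi n\rfloor + k + i} = 1 + \frac{n}{\lfloor\phi n\rfloor + k + i}.
\]

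It then suffices to show that in each case the added fraction lies strictly between $0$ and $1$. Positivity is immediate because $n, k, i \geq 1$ make numerators and denominators positive. For strict upper bound $<1$, I would use the fact that $\phi > 1$, so $\phi n \geq n$ and hence $\lfloor\phi n\rfloor \geq n$ for every positive integer $n$. Therefore
\[
\lfloor\phi n\rfloor + i \;\geq\; n + i \;>\; n,
\qquad
\lfloor\phi n\rfloor + k + i \;\geq\; n + k + i \;>\; n,
\]
so both added fractions are strictly less than $1$. Consequently each of the two full expressions lies in $(1,2)$ and their floors both equal $1$, establishing the chain of equalities.

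There is essentially no obstacle: the only thing to be careful about is the lower bound $\lfloor \phi n\rfloor \geq n$, which is a one-line consequence of $\phi > 1$ (and in fact a trivial check for $n = 1$ where $\lfloor \phi\rfloor = 1$). No further estimates on the fractional part of $\phi n$ are needed, because the assumptions $k \geq 1$ and $i \geq 1$ already give enough slack in the denominators.
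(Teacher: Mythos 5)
Your proposal is correct and follows essentially the same route as the paper: both rest on the identity $\lfloor\phi^2 n\rfloor = \lfloor\phi n\rfloor + n$ and the observation that the leftover term $n/(\lfloor\phi n\rfloor + k + i)$ (resp.\ $n/(\lfloor\phi n\rfloor + i)$) contributes nothing to the floor because $\lfloor\phi n\rfloor \geq n$. The paper phrases this as $1 + \lfloor n/(\lfloor\phi n\rfloor + k + i)\rfloor = 1$, while you phrase it as the full expression lying in $(1,2)$; these are the same argument.
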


\begin{proof}
We have
\begin{align*}
  \Bigg{\lfloor} \frac{\lfloor\phi^2 n\rfloor + k + i}{\lfloor\phi n\rfloor + k + i}\Bigg{\rfloor}
& = \Bigg{\lfloor} \frac{\lfloor\phi n\rfloor +n + k + i}{\lfloor\phi n\rfloor + k + i}\Bigg{\rfloor}
= 1 + \Bigg{\lfloor} \frac{n}{\lfloor\phi n\rfloor + k + i}\Bigg{\rfloor}\\
&= 1
= 1 + \Bigg{\lfloor} \frac{n}{\lfloor\phi n\rfloor  + i}\Bigg{\rfloor}
= \Bigg{\lfloor} \frac{\lfloor\phi^2 n\rfloor + i}{\lfloor\phi n\rfloor + i} \Bigg{\rfloor}.
\end{align*}
\end{proof}

We now solve the winning strategy for $\F$-Wythoff.

\smallskip
\begin{theorem}[Algebraic characterization] \label{FW-P}
A position in $\F$-Wythoff is a $\P$-position if and only if it is an element of the set
\[\{(0,0), (\lfloor \phi n \rfloor+1, \lfloor \phi^2n \rfloor +1) | n \geq 0 \}. \]
\end{theorem}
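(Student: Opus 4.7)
The plan is the standard $\P$-position verification. Set $P_n := (\lfloor\phi n\rfloor + 1, \lfloor\phi^2 n\rfloor + 1)$ for $n\ge 0$ and $S := \{(0,0)\} \cup \{P_n : n \ge 0\}$. The two things to verify are: (A) every legal follower of a member of $S$ lies outside $S$; (B) every position outside $S$ admits a legal move into $S$. The theorem then follows by induction on the sum of the pile sizes.

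For (A): $(0,0)$ is terminal. A type~(i) move from $P_n$ keeps one coordinate fixed and strictly decreases the other; by Corollary~\ref{Comp.1} the preserved coordinate identifies $n$ uniquely among all $P_m$, so the result cannot be $P_m$ for any $m$, and it is not $(0,0)$ either since the preserved coordinate is positive. A type~(ii) move preserves the difference $\lfloor\phi^2 n\rfloor - \lfloor\phi n\rfloor = n$, so it cannot reach $P_m$ for $m\ne n$; and from $P_0=(1,1)$ the only candidate type~(ii) move, namely to $(0,0)$, would make the integer ratio undefined, hence is illegal.

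For (B), consider $(a,b)\notin S$ with $a\le b$. If $a=0$, a type~(i) move to $(0,0)$ works. If $a=1$ (forcing $b\ge 2$), a type~(i) move to $P_0=(1,1)$ works. For $a\ge 2$, Corollary~\ref{Comp.1} provides a unique $n$ with $a\in\{\lfloor\phi n\rfloor+1,\lfloor\phi^2 n\rfloor+1\}$. When $a=\lfloor\phi^2 n\rfloor+1$, since $b\ge a>\lfloor\phi n\rfloor+1$, the type~(i) move lowering $b$ to $\lfloor\phi n\rfloor+1$ lands on $P_n$ (as an unordered pair). When $a=\lfloor\phi n\rfloor+1$ and $b>\lfloor\phi^2 n\rfloor+1$, lower $b$ to $\lfloor\phi^2 n\rfloor+1$ to land on $P_n$. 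The remaining case is $a=\lfloor\phi n\rfloor+1$ with $a\le b\le\lfloor\phi^2 n\rfloor$; put $m:=b-a\in\{0,\ldots,n-1\}$ and apply the type~(ii) move removing $k:=\lfloor\phi n\rfloor-\lfloor\phi m\rfloor\ge 1$ tokens from each pile, which lands on $P_m$.

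The only nonroutine step is verifying that this last type~(ii) move is legal, i.e., that its integer ratio is preserved. Using $\lfloor\phi^2 m\rfloor=\lfloor\phi m\rfloor+m$ one rewrites $(a,b)=(\lfloor\phi m\rfloor+1+k,\lfloor\phi^2 m\rfloor+1+k)$, and Lemma~\ref{U-lem} (applied with $m$ in place of $n$ and $i=1$) gives integer ratio $1$ both at $(a,b)$ and at $P_m$ whenever $m\ge 1$; the $m=0$ case is the trivial transition $(a,a)\to(1,1)$ with constant ratio $1$. The main obstacle is choosing the case split so that Lemma~\ref{U-lem} applies cleanly; once that is arranged, everything else reduces to bookkeeping using Lemma~\ref{Comp} and Corollary~\ref{Comp.1}.
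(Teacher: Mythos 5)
Your proof is correct, and it follows a somewhat different route from the paper's at the two key steps. For the existence part (every position outside the set has a move into it), the paper argues in one stroke: it passes to $q=(a-1,b-1)$, invokes Theorem~\ref{W-P} (from a non-$\P$-position of Wythoff's game there is a move to a $\P$-position), and translates that move back, checking the ratio condition with Lemma~\ref{U-lem}; you instead reconstruct the winning move explicitly from Beatty complementarity (Corollary~\ref{Comp.1}), splitting into the cases $a=\lfloor\phi^2 n\rfloor+1$, $a=\lfloor\phi n\rfloor+1$ with $b$ large, and $a=\lfloor\phi n\rfloor+1$ with $a\le b\le\lfloor\phi^2 n\rfloor$, where you compute $m=b-a$ and $k=\lfloor\phi n\rfloor-\lfloor\phi m\rfloor$ directly. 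For the closure part, the paper handles the diagonal move by reducing it to an impossible move between $\P$-positions of Wythoff's game, while you use the invariant that a diagonal move preserves the difference $\lfloor\phi^2 n\rfloor-\lfloor\phi n\rfloor=n$. The paper's argument is shorter because it leans on Wythoff's solved strategy; yours is longer but more self-contained (it essentially re-derives the translated Wythoff move), it pins down exactly which $k$ and $i$ feed into Lemma~\ref{U-lem}, and its explicit case split ($a=0$, $a=1$, $a\ge2$, plus the $m=0$ diagonal move to $(1,1)$) cleanly covers the boundary situations that the paper's uniform argument treats only implicitly.
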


\begin{proof}

Let $\A = \{(0,0), (\lfloor\phi n\rfloor +1, \lfloor \phi^2 n \rfloor + 1) | n \geq 0\}$. We need to show that the following two properties hold for $\F$-Wythoff:
\begin{itemize}
\item [(i)]  Every move from a position in $\A$ cannot terminate in $\A$,
\item [(ii)] From every position not in $\A$, there is a move terminating in $\A$.
\end{itemize}

For (i), assume by contradiction that there is a move from $(\lfloor\phi n\rfloor +1, \lfloor \phi^2 n \rfloor + 1)$ to $(\lfloor\phi m\rfloor +1, \lfloor \phi^2 m \rfloor + 1)$ for some $n > m$. At the moment, there are at most three possibilities for this move: (1) removing some $k$ tokens from the small pile $\lfloor\phi n\rfloor +1$, or (2) removing some $k$ tokens from the large pile $\lfloor\phi^2 n\rfloor +1$, or (3) removing some $k$ tokens from both piles. Note that the move (3) does not exist. In fact, otherwise one can move from $(\lfloor\phi n\rfloor, \lfloor \phi^2 n \rfloor)$ to $(\lfloor\phi m\rfloor, \lfloor \phi^2 m \rfloor)$ in Wythoff's game. This is impossible. Possibility (1) implies the system
\begin{align*}
\begin{cases}
\lfloor\phi n\rfloor +1 - k = \lfloor\phi m\rfloor +1, \\
\lfloor \phi^2 n \rfloor + 1 = \lfloor \phi^2 m \rfloor + 1.
\end{cases}
\end{align*}
It follows from the second equation that $n = m$, giving a contradiction. Possibility (2) implies the system
\begin{align*}
\begin{cases}
\lfloor \phi^2 n \rfloor + 1 - k = \lfloor\phi m\rfloor +1, \\
\lfloor\phi n\rfloor +1 = \lfloor \phi^2 m \rfloor + 1
\end{cases}
\end{align*}
giving $\lfloor\phi n\rfloor = \lfloor \phi^2 m \rfloor$. This is impossible by Lemma \ref{Comp}.

For (ii), let $p = (a,b) \notin \A$. Set $q = (a-1,b-1)$. Then $q$ is not of the form $(\lfloor\phi n\rfloor, \lfloor\phi^2 n\rfloor)$. By Theorem \ref{W-P}, there exists a legal move from $q$ to some $(\lfloor\phi n\rfloor, \lfloor\phi^2 n\rfloor)$ in Wythoff's game. This move is identical to the move from $p$ to $(\lfloor\phi n\rfloor +1, \lfloor\phi^2 n\rfloor+1)$ in $\F$-Wythoff provided that the integer ratio of the two entries does not change if an equal number of tokens is removed from both piles. The proof is complete by Lemma \ref{U-lem}.
\end{proof}

Set $(a_0,b_0) = (0,0)$. For $n \geq 1$, set $a_n = \lfloor\phi (n-1)\rfloor +1$, $b_n = \lfloor\phi^2 (n-1)\rfloor +1$. Then $\{(a_i,b_i) | i \geq 0\}$ is the set of $\P$-positions of $\F$-Wythoff. We now describe a recursive characterization of the sequence $\{(a_i,b_i)\}_{i \geq 0}$.

\smallskip
\begin{corollary}[Recursive characterization] \label{FW-P-A}
Consider the sequence $\{(a_i,b_i)\}_{i \geq 1}$ of $\P$-positions of $\F$-Wythoff. For each $n \geq 1$, we have
\begin{align*}
    \begin{cases}
    a_n = mex\{a_i,b_i | 0 \leq i \leq n-1\},\\
    b_n = a_n+n-1.
    \end{cases}
\end{align*}
\end{corollary}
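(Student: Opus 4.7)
The plan is to derive both equations directly from the algebraic characterization of $\P$-positions established in Theorem~\ref{FW-P}. The second equation $b_n = a_n + n - 1$ is essentially the identity $\lfloor\phi^2 k\rfloor = \lfloor\phi k\rfloor + k$ recorded at the start of Section~2: substituting $k = n - 1$ yields
\[
b_n = \lfloor\phi^2(n-1)\rfloor + 1 = \lfloor\phi(n-1)\rfloor + (n-1) + 1 = a_n + (n-1),
\]
which also covers the degenerate case $n = 1$ where $a_1 = b_1 = 1$.

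For the mex equation I would set $T_n = \{a_i, b_i : 0 \leq i \leq n-1\}$ and verify the two defining properties of the mex: (a) $a_n \notin T_n$, and (b) every integer $k$ with $0 \leq k < a_n$ lies in $T_n$. Property (a) follows from two observations. First, the sequence $(\lfloor\phi j\rfloor)_{j \geq 0}$ is strictly increasing (because $\phi > 1$), so $a_n > a_i$ for every $i < n$. Second, any equality $a_n = b_i$ with $i \geq 2$ would give $\lfloor\phi(n-1)\rfloor = \lfloor\phi^2(i-1)\rfloor$ with both arguments positive, contradicting Lemma~\ref{Comp}. The residual cases $i \in \{0,1\}$ are handled by hand: $b_0 = 0$ and $b_1 = 1$ are both strictly less than $a_n$ whenever $n \geq 2$, while $n = 1$ is checked directly from $T_1 = \{0\}$.

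For (b), given $0 \leq k < a_n$, the values $k = 0$ and $k = 1$ appear in $T_n$ as $a_0$ and $a_1 = b_1$ respectively (noting $1 \leq n-1$ is automatic when $k = 1 < a_n$). For $k \geq 2$, Lemma~\ref{Comp} yields a unique $m \geq 1$ with $k - 1 \in \{\lfloor\phi m\rfloor, \lfloor\phi^2 m\rfloor\}$, so $k = a_{m+1}$ or $k = b_{m+1}$, and what remains is to confirm $m + 1 \leq n - 1$. If $k = a_{m+1}$, the strict monotonicity of $\lfloor\phi j\rfloor$ applied to $\lfloor\phi m\rfloor + 1 = k < a_n = \lfloor\phi(n-1)\rfloor + 1$ does the job. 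If $k = b_{m+1}$, invoking the identity $\lfloor\phi^2 m\rfloor = \lfloor\phi m\rfloor + m$ converts $k < a_n$ into $\lfloor\phi m\rfloor + m < \lfloor\phi(n-1)\rfloor$; assuming $m \geq n-1$ would force the left side to be at least $\lfloor\phi(n-1)\rfloor + (n-1)$, a contradiction.

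The main difficulty is not conceptual but bookkeeping: one must track the index shift $n \mapsto n-1$ built into the definition $a_n = \lfloor\phi(n-1)\rfloor + 1$, and one must handle the repeated value $a_1 = b_1 = 1$ which prevents the translated Beatty sequences from partitioning $\NN_{\geq 0}$ cleanly. The only sub-case that is not a direct appeal to monotonicity is $k = b_{m+1}$ in (b), where the additive decomposition of $\lfloor\phi^2 m\rfloor$ is genuinely needed to compare $b_{m+1}$ against $a_n$.
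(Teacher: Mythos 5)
Your proof is correct, but it takes a genuinely different route from the paper. The paper disposes of the corollary in one line: the identity $b_n=a_n+n-1$ comes from Theorem~\ref{FW-P} (as in your computation), while the mex equation is quoted from Corollary~\ref{a_i}, which is proved later via Theorem~\ref{Row}; that is a game-theoretic argument about the distribution of Sprague--Grundy values (every row of the array contains each value exactly once), applied with $k=0$, and the paper even has to remark that the forward reference is not circular. You instead verify the mex property arithmetically and directly from the algebraic characterization: monotonicity of $\lfloor\phi j\rfloor$ plus the Beatty complementarity of Lemma~\ref{Comp} give $a_n\notin T_n$, and the same complementarity together with the decomposition $\lfloor\phi^2 m\rfloor=\lfloor\phi m\rfloor+m$ shows every $k<a_n$ already occurs among $\{a_i,b_i: i\le n-1\}$, with the boundary cases $k\in\{0,1\}$, the coincidence $a_1=b_1=1$, and the index shift handled explicitly. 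What your approach buys is self-containment and independence from the later machinery (no appeal to Theorem~\ref{Row}, whose proof is itself nontrivial, and no forward reference); what the paper's route buys is generality, since Corollary~\ref{a_i} yields the same mex recursion for every $k$-sequence, not just the $\P$-positions. Both arguments are sound; yours could replace the paper's one-line proof if one wanted the recursive characterization established before Section~3.
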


\begin{proof}
The first equation follows from Corollary \ref{a_i} below and the second equation follows from Theorem \ref{FW-P}. Note that the order that Corollary \ref{a_i} comes in the paper does not affect its independent content used for this corollary.
\end{proof}


We next give a formula for those positions which have Sprague-Grundy value 1.

\smallskip
\begin{theorem} \label{V1}
The set of positions which have Sprague-Grundy value $1$ in $\F$-Wythoff is
\[\B = \{(0,1), (\lfloor \phi n \rfloor +2, \lfloor \phi^2 n \rfloor +2) | n \geq 0\}.\]
\end{theorem}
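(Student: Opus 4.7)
The plan is to adapt the proof of Theorem \ref{FW-P}, with one extra condition needed because we are pinning down Sprague-Grundy value $1$ rather than $0$. Writing $\A$ for the set of $\P$-positions described in Theorem \ref{FW-P}, it suffices, by strong induction on position size, to verify three statements: (I) no move from a position in $\B$ terminates in $\B$; (II) from every position in $\B$, some move terminates in $\A$; and (III) from every position $p \notin \A \cup \B$, some move terminates in $\B$. Together with Theorem \ref{FW-P}, conditions (I) and (II) force the mex at a $\B$-position to equal $1$, while (III) guarantees that a position outside $\A \cup \B$ has a Sprague-Grundy value $1$ follower, pushing its mex past $1$.

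For (I) I would split into cases. A putative move between two regular $\B$-positions $(\lfloor \phi n \rfloor + 2, \lfloor \phi^2 n \rfloor + 2)$ and $(\lfloor \phi m \rfloor + 2, \lfloor \phi^2 m \rfloor + 2)$ with $n > m$ can be ruled out exactly as in the proof of Theorem \ref{FW-P}: the two one-pile sub-cases force $n = m$ or contradict Lemma \ref{Comp}, while a type (ii) move, after shifting both entries down by $2$, would induce a Wythoff move between two distinct Wythoff $\P$-positions, which is impossible. To rule out $(0,1)$ as a follower of $(\lfloor \phi n \rfloor + 2, \lfloor \phi^2 n \rfloor + 2)$, note that one-pile moves preserve an entry that is $\geq 2$, and for a type (ii) move the identity $\lfloor \phi^2 n \rfloor - \lfloor \phi n \rfloor = n$ forces $n = 1$, in which case the would-be move has undefined integer ratio after the move and is therefore illegal.

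For (II), the only follower of $(0,1)$ is $(0,0) \in \A$. From $(\lfloor \phi n \rfloor + 2, \lfloor \phi^2 n \rfloor + 2)$, removing one token from each pile produces $(\lfloor \phi n \rfloor + 1, \lfloor \phi^2 n \rfloor + 1) \in \A$, and the integer-ratio constraint is checked by Lemma \ref{U-lem} with $k = 1$, $i = 1$, which yields $1$ for both ratios.

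For (III), the boundary cases are handled directly: $(0, b) \to (0, 1)$ works for $b \geq 2$, and $(1, b) \to (1, 0) = (0, 1)$ works for $b \geq 2$. For $p = (a, b)$ with $a, b \geq 2$, set $q = (a - 2, b - 2)$. Since $p \notin \B$, the position $q$ is not a Wythoff $\P$-position, so by Theorem \ref{W-P} some Wythoff move sends $q$ to a Wythoff $\P$-position $(\lfloor \phi m \rfloor, \lfloor \phi^2 m \rfloor)$. A one-pile Wythoff move translates verbatim to an $\F$-Wythoff move sending $p$ to $(\lfloor \phi m \rfloor + 2, \lfloor \phi^2 m \rfloor + 2) \in \B$; a type (ii) Wythoff move of size $k$ translates similarly, and its legality in $\F$-Wythoff follows from Lemma \ref{U-lem} with $i = 2$, which shows that both the before and after integer ratios equal $1$. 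The main obstacle will be careful bookkeeping near the corner of the board, making sure the exceptional $\B$-position $(0, 1)$ is neither produced spuriously in (I) nor missed in (III); beyond this, the argument is a routine translation of Wythoff's strategy through Lemmas \ref{Comp} and \ref{U-lem}.
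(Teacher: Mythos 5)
Your proposal is correct and follows essentially the same route as the paper: the same shift-by-two reduction to Wythoff's $\P$-positions via Theorem \ref{W-P}, the same legality check for equal-removal moves through Lemma \ref{U-lem}, and the same boundary moves to $(0,1)$ when $a\in\{0,1\}$. The only cosmetic difference is that you replace the paper's condition $\B\cap\P=\emptyset$ with the equivalent requirement (your (II)) that every position of $\B$ has a follower in $\A$, which, since $\A$ is exactly the Grundy-$0$ set, carries the same information.
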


\begin{proof}
Recall that the set of $\P$-positions of $\F$-Wythoff is
\[\P = \{(0,0),(\lfloor \phi n \rfloor +1, \lfloor \phi^2 n \rfloor + 1)| n \geq 0\}.\]
Based on the definition of Sprague-Grundy function, we need to prove that
\begin{itemize}
\item [(i)] $\B \cap \P = \emptyset$,
\item [(ii)] There is no move from a position in $\B$ to a position in $\B$,
\item [(iii)] From every position not in $\B \cup\P$, there exists a move to some position in $\B$.
\end{itemize}

For (i), assume by contradiction that $\B \cap \P \neq \emptyset$. Then there exist $n,m$ such that
\[(\lfloor \phi n \rfloor +1, \lfloor \phi^2 n \rfloor + 1) = (\lfloor \phi m \rfloor +2, \lfloor \phi^2 m \rfloor +2).\]
It follows that
\begin{align*}
\begin{cases}
\lfloor \phi n \rfloor = \lfloor \phi m \rfloor +1, \\
\lfloor \phi n \rfloor + n = \lfloor \phi m \rfloor + m +1.
\end{cases}
\end{align*}
One can check that this system of equations gives a contradiction.

For (ii), one can check that there is no move from a position of the form $(\lfloor \phi n \rfloor +2, \lfloor \phi^2 n \rfloor +2)$ to (0,1). Similar to case (ii) in Theorem \ref{FW-P}, one can check that there is no move between positions of the form $(\lfloor \phi n \rfloor +2, \lfloor \phi^2 n \rfloor + 2)$.

For (iii), let $(a,b) \notin \B \cup \P$ with $a \leq b$. One can move from $(a,b)$ to (0,1) if either $a = 0$ or $a = 1$. We now assume that $a \geq 2$. Consider the position $p = (a-2,b-2)$. Note that $p$ is not of the form $(\lfloor \phi n \rfloor, \lfloor \phi^2 n \rfloor)$. In Wythoff's game, there is one move from $p$ to some position $(\lfloor \phi m \rfloor, \lfloor \phi^2 m \rfloor)$. This move is identical to the move from $(a,b)$ to $(\lfloor \phi m \rfloor + 2, \lfloor \phi^2 m \rfloor +2)$ in $\F$-Wythoff provided that the integer ratio of the two entries does not change if an equal number of tokens is removed from both piles. The proof is then complete by Lemma \ref{U-lem}.
\end{proof}

Theorem \ref{V1} shows that, except for the first position, those positions which have Sprague-Grundy value 1 are all obtained from $\P$-positions by adding 1 to each entry. Similarly, those positions which have Sprague-Grundy value 2 in $\F$-Wythoff are also obtained from the $\P$-positions via a translation, except for the first two positions. We leave the proof of the following theorem for the readers.

\smallskip
\begin{theorem} \label{V2}
A position $(a,b)$ has Sprague-Grundy value $2$ in $\F$-Wythoff if and only if $(a,b)$ is an element of the set
\[\{(0,2), (1,3), (\lfloor \phi n \rfloor + 4, \lfloor \phi^2 n \rfloor + 4) | n \geq 0\}.\]
\end{theorem}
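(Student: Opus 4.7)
The plan is to mimic the structure of the proofs of Theorems \ref{FW-P} and \ref{V1}. Let $\P$ and $\B$ denote the sets of positions of Sprague-Grundy values $0$ and $1$ characterized in Theorems \ref{FW-P} and \ref{V1}, and set $\mathcal{C} = \{(0,2),(1,3)\} \cup \{(\lfloor \phi n \rfloor + 4, \lfloor \phi^2 n \rfloor + 4) : n \geq 0\}$. I will verify three conditions: (i) $\mathcal{C} \cap (\P \cup \B) = \emptyset$; (ii) no move from a position in $\mathcal{C}$ terminates in $\mathcal{C}$; and (iii) from every $(a,b) \notin \P \cup \B \cup \mathcal{C}$ there is a legal $\F$-Wythoff move to some element of $\mathcal{C}$. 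Combined with the mex characterization of Sprague-Grundy values and an easy induction on the sum $a+b$, these conditions imply the theorem: a position in $\mathcal{C}$ has no follower in $\mathcal{C}$ by (ii) and has followers in $\P$ and $\B$ (since it lies outside $\P \cup \B$, and $p \notin \P$ forces a follower in $\P$, whence $p \notin \B$ forces a further follower of value $1$), so its Sprague-Grundy value is exactly $2$; conversely, a position outside $\P \cup \B \cup \mathcal{C}$ has a follower in $\mathcal{C}$ of strictly smaller size by (iii), so by induction its Sprague-Grundy value is not $2$.

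For (i) and (ii) the two initial positions $(0,2)$ and $(1,3)$ are dispatched by direct inspection of their followers and of $\P \cup \B$. For the infinite family $(\lfloor \phi n \rfloor + 4, \lfloor \phi^2 n \rfloor + 4)$ the arguments are parallel to those in Theorems \ref{FW-P} and \ref{V1}: any equality with a position of $\P$ or $\B$ forces, after subtracting the two coordinates and invoking the identity $\lfloor \phi^2 n \rfloor - \lfloor \phi n \rfloor = n$, both $m = n$ and $+4=+1$ or $+4=+2$, a contradiction; a hypothetical equal-pile move between two elements of the infinite family induces a Wythoff move between distinct Wythoff $\P$-positions, which is impossible; and one-pile moves immediately force $n = m$. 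Degenerate equal-pile moves that would send a pile to zero, such as $(1,3) \to (0,2)$ or $(7,9) \to (0,2)$ with $(7,9) = (\lfloor 2\phi \rfloor + 4, \lfloor 2\phi^2 \rfloor + 4)$, are ruled out because the integer ratio of the two entries changes, exactly as handled implicitly earlier in the paper.

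For (iii) I split on the smaller coordinate $a$. When $a \in \{0,1,2,3\}$, exclusion from $\P \cup \B \cup \mathcal{C}$ pins $b$ to a short explicit list (for example, $a = 0$ forces $b \geq 3$; $a = 1$ forces $b = 2$ or $b \geq 4$; $a = 2$ forces $b \geq 4$; $a = 3$ forces $b = 3$ or $b \geq 6$), and for each case a move into $\mathcal{C}$ is exhibited directly, sometimes exploiting that positions are unordered pairs (e.g.\ from $(2,b)$ with $b \geq 4$, emptying the second pile gives $(2,0) = (0,2) \in \mathcal{C}$; from $(3,3)$, removing $2$ from the first pile gives $(1,3) \in \mathcal{C}$). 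For $a \geq 4$, set $q = (a-4, b-4)$; since $(a,b) \notin \mathcal{C}$, $q$ is not a Wythoff $\P$-position, so by Theorem \ref{W-P} there is a Wythoff move from $q$ to some $(\lfloor \phi m \rfloor, \lfloor \phi^2 m \rfloor)$. Translating this arithmetic move back lands $(a,b)$ at $(\lfloor \phi m \rfloor + 4, \lfloor \phi^2 m \rfloor + 4) \in \mathcal{C}$; one-pile moves are automatically legal in $\F$-Wythoff, and for equal-pile moves the invariance of the integer ratio is precisely Lemma \ref{U-lem} applied with $i = 4$, just as in the earlier proofs.

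The main obstacle is the fiddly case analysis for small $a$ in (iii): one must enumerate the residual positions with $a \in \{0,1,2,3\}$ that lie outside $\P \cup \B \cup \mathcal{C}$ and produce an explicit $\mathcal{C}$-terminating move in each subcase, while verifying the integer-ratio condition on every equal-pile move and carefully excluding moves that empty a pile. Once these small cases are cleared, the general argument for $a \geq 4$ is a direct transcription of the proofs of Theorems \ref{FW-P} and \ref{V1}, with the shift changed from $1$ or $2$ to $4$.
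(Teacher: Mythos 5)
Your plan coincides with the route the paper intends: the paper omits the proof of Theorem \ref{V2} as being analogous to Theorems \ref{FW-P} and \ref{V1}, and your three conditions (disjointness from $\P\cup\B$, no move within the candidate set, a move into it from every position outside $\P\cup\B\cup\mathcal{C}$) together with the mex characterization and induction are exactly the right framework, with Lemma \ref{U-lem} (at $i=4$) certifying the ratio condition in the generic case $a\geq 4$. Two small slips should be patched. First, in condition (iii) your enumeration for $a=3$ is wrong as stated: $(3,5)$ also lies outside $\P\cup\B\cup\mathcal{C}$ (its Sprague--Grundy value is $6$), so the list should read ``$b=3$ or $b\geq 5$''; it causes no harm, since the same one-pile move to $(3,1)=(1,3)\in\mathcal{C}$ that handles $b\geq 6$ handles it. Second, in condition (ii) the equal-pile move $(7,9)\to(1,3)$ falls under none of your three stated cases (it neither empties a pile nor connects two members of the infinite family), yet it is the one remaining candidate move into $\{(0,2),(1,3)\}$ from the family; it is excluded because the integer ratio changes from $1$ to $3$, and you should say so explicitly.
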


We do not know a formula of such forms for those positions which have Sprague-Grundy values more than 2. It would be therefore interesting if we can answer the following question.

\smallskip
\begin{question}
Does there exist $g > 2$ such that those positions which have Sprague-Grundy value $g$, possibly except for a finite number of positions, can be obtained by a translation from the $\P$-positions?
\end{question}


\section{On the distribution of Sprague-Grundy values}
Consider the 2-dimension infinite array $\mathbb{A}$ whose $(i,j)$ entry is the Sprague-Grundy value $\G(i,j)$ of the position $(i,j)$ in $\F$-Wythoff. Table \ref{T} displays some values of the array with $i, j \leq 9$.
\begin{table} [ht]
\begin{center}
\begin{tabular}{c|cccccccccc}
  9 &9 &8 &11 &10 &12 &13&1 &2 &6 &7 \\
  8 &8 &9 &10 &7  &11 &0 &12&4 &5 &6 \\
  7 &7 &6 &5  &8  &9  &1 &10&11&4 &2 \\
  6 &6 &7 &4  &5  &0  &2 &3 &10&12&1 \\
  5 &5 &4 &7  &6  &3  &8 &2 &1 &0 &13 \\
  4 &4 &5 &6  &1  &2  &3 &0 &9 &11&12 \\
  3 &3 &2 &0  &4  &1  &6 &5 &8 &7 &10 \\
  2 &2 &3 &1  &0  &6  &7 &4 &5 &10&11 \\
  1 &1 &0 &3  &2  &5  &4 &7 &6 &9 &8 \\
  0 &0 &1 &2  &3  &4  &5 &6 &7 &8 &9\\
\hline
i/j &0 &1 &2 &3 &4 &5 &6 &7 &8 &9
 \end{tabular}
\caption{Sprague-Grundy values $\G(i,j)$ for $i,j\leq 9$}\label{T}
\end{center}
\end{table}
We discuss in this section the distribution of Sprague-Grundy values in the array $\mathbb{A}$. We first show that each row (column) in $\mathbb{A}$ contains every Sprague-Grundy value exactly one time.

\smallskip
\begin{theorem} \label{Row}
Let $a, g$ be nonnegative integers. There exists a unique integer $b$ such that $\G(a,b) = g$.
\end{theorem}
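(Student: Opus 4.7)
The plan is to handle uniqueness and existence separately. Uniqueness is immediate: if $b_1 < b_2$ both satisfied $\G(a, b_1) = \G(a, b_2)$, then the type-(i) move from $(a, b_2)$ to $(a, b_1)$ obtained by removing $b_2 - b_1$ tokens from the second pile would contradict the basic fact that positions joined by a legal move have distinct Sprague-Grundy values.

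For existence, I would proceed by strong induction on $a + g$, with trivial base $(a, g) = (0, 0)$. Two sub-cases of the inductive step are easy. When $a = 0$, only type-(i) moves from the second pile are legal, so $\G(0, b) = b$ by a straightforward subinduction, and $b = g$ works. When $g = 0$ and $a \geq 1$, I apply Theorem \ref{FW-P} together with Corollary \ref{Comp.1}: take $b = 1$ for $a = 1$, and for $a \geq 2$ take $b$ to be the Beatty-sequence partner of $a$ in the unique $\P$-position whose first coordinate is $a$.

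The main case is $a, g \geq 1$. Suppose for contradiction that no $b$ satisfies $\G(a, b) = g$. The inductive hypothesis applied to $(a, g')$ for $g' < g$ yields a unique $\beta_{g'}$ with $\G(a, \beta_{g'}) = g'$, and applied to $(a', g)$ for $a' < a$ yields a unique $\alpha_{a'}$ with $\G(a', \alpha_{a'}) = g$. Let $M = \max_{g' < g} \beta_{g'}$, $N = \max_{a' < a} \alpha_{a'}$, and pick $b^* > \max(M, N + a)$. Since $b^* > M$, uniqueness forbids $\G(a, b^*) \in \{0, \ldots, g-1\}$; combined with the contradiction hypothesis, this forces $\G(a, b^*) > g$, so by the mex definition some follower of $(a, b^*)$ must have value exactly $g$. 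I would then eliminate each follower type: a pile-$1$ move to $(a', b^*)$ would force $b^* = \alpha_{a'} \leq N$; a pile-$2$ move to $(a, b')$ would directly contradict the hypothesis; and a diagonal type-(ii) move to $(a-k, b^*-k)$ with $k \geq 1$ (when the ratio constraint allows it) would force $b^* - k = \alpha_{a-k}$ by the IH on $(a-k, g)$, whence $b^* \leq N + a$. Each option violates the choice of $b^*$, contradiction.

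The main obstacle is selecting an induction parameter that validates the IH for all three follower types, especially the diagonal moves $(a-k, b^*-k)$; $a + g$ works because $(a-k) + g < a + g$ for $k \geq 1$. The integer-ratio restriction on type-(ii) moves specific to $\F$-Wythoff is not an obstacle, since it can only remove candidate followers and therefore strengthens rather than weakens the final contradiction.
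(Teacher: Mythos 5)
Your proof is correct, but the existence argument runs along a genuinely different line from the paper's. The paper fixes the row $a$, takes $g$ to be the smallest value missing from that row, and then considers infinitely many columns $b_s=b_0+s(a+1)$: for each one the mex computation forces a follower of $(a,b_s)$ with value $g$ in a lower row, and since same-column followers $(a-i_s,b_s)$ can account for only finitely many $s$ (by row uniqueness), a pigeonhole argument produces two diagonal followers $(a-l,b_{s_1}-l)$ and $(a-l,b_{s_2}-l)$ with the same value $g$ and a legal move between them --- a contradiction; no induction across rows is used. You instead run a strong induction on $a+g$, which gives you the bounds $M$ and $N$ locating all occurrences of values $<g$ in row $a$ and of value $g$ in rows $a'<a$, so that a single sufficiently large column $b^*$ already yields the contradiction, with each follower type eliminated directly and no pigeonhole needed. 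Your version is a bit more economical (one witness column, the arithmetic progression and the counting step disappear, and as you note the integer-ratio restriction is harmless since it only deletes followers); the paper's version buys independence from any induction on $a$, working entirely within the fixed row from the minimality of $g$ and row uniqueness alone. Your handling of uniqueness, of the $a=0$ and $g=0$ cases (via $\G(0,b)=b$, Theorem \ref{FW-P} and Corollary \ref{Comp.1}), and of all three follower types is complete, so the argument stands as written.
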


\begin{proof}
The uniqueness holds by the definition of $\F$-Wythoff. We now prove the existence. Note that the theorem holds for $a = 0$. We first show that the theorem holds for $g = 0$. If $a = 1$ then $\G(1,1) = 0$. If $a \geq 2$, by Corollary \ref{Comp.1}, there exists $m$ such that either $a = \lfloor \phi m \rfloor + 1$ or $a = \lfloor \phi^2 m \rfloor + 1$. The former case gives $\G(a,\lfloor \phi^2 m \rfloor + 1) = 0$ and the latter case gives $\G(a,\lfloor \phi m \rfloor + 1) = 0$.

Assume that $g> 0$ and assume by contradiction that the sequence $R_a = \{\G(a,n)\}_{n \geq 0}$ does not contain $g$. We can assume that $g$ is the smallest integer not in the sequence $R_a$. Then there exists the smallest integer $b_0 \geq a$ such that
\begin{align} \label{BW-b0}
\{0, 1, \ldots,g -1\} \subseteq \{\G(a,i) | i \leq b_0-1\}.
\end{align}
For each $s \geq 1$, let $b_s = b_0+s(a+1)$. We have
\begin{align*}
\G(a,b_s) = mex\{&\G(a-i,b_s), \G(a,b_s-j), \G(a-l,b_s-l) | 1 \leq i \leq a,\\
                 &1 \leq j  \leq b_s, 1 \leq l < a, \lfloor\frac{b_s-l}{a-l}\rfloor = \lfloor\frac{b}{a}\rfloor\}.
\end{align*}
By (\ref{BW-b0}), the $mex$ set contains $\{0, 1, \ldots,g -1\}$. Note that $\G(a,b_s) \neq g$. Therefore, $\G(a,b_s) > g$ and so the $mex$ set contains $g$. Since $\G(a,b_s-i) \neq g$ for all $i$, there exists either $i_s \leq a$ or $l_s < a$ such that either $\G(a-i_s,b_s) = g$ or $\G(a-l_s,b_s-l_s) = g$. Note that as $s$ varies, the integers $b_s$ assume infinitely many values, while $i_s, l_s\leq a$ for each $s$. Moreover, by the uniqueness, there are at most $a+1$ positions of the form $(a-i_s,b_s)$ whose Sprague-Grundy values all are $g$. So there must exist $s_1 < s_2$ such that $l_{s_1} = l_{s_2}$ and $\G(a-l_{s_1},b_{s_1}-l_{s_1}) = \G(a-l_{s_2},b_{s_2}-l_{s_2})$. This is impossible since one can move from $(a-l_{s_2},b_{s_2}-l_{s_2})$ to $(a-l_{s_1},b_{s_1}-l_{s_1})$ by removing $b_{s_2}-b_{s_1}$ tokens from the larger pile. Thus, the sequence $R_a$ contains $g$ and so $\G(a,b) = g$ for some $b$.
\end{proof}

Recall that the \emph{Wythoff sequence}, or sequence of $\P$-positions, of Wythoff's game is the sequence $\{(\lfloor \phi n \rfloor, \lfloor \phi^2 n \rfloor)\}_{n \geq 0}$. Let $A = \{a_n | n \geq 0\}$, $B = \{b_n | n \geq 0\}$ where $a_n = \lfloor \phi n \rfloor$, $b_n = \lfloor \phi^2 n \rfloor$. By Lemma \ref{Comp}, the Wythoff sequence of Wythoff's game satisfies conditions $A \cap B = \{0\}$, $A \cup B = \Z_{\geq 0}$ where $\Z_{\geq 0}$ denotes the set of nonnegative integers. Moreover, $a_n = mex\{a_i, b_i | 0 \leq i < n\}$. Curiously, these three conditions hold for several variants of Wythoff's game \cite{Heapgame, Howtobeat, Wyt-mis, Adjoining, ho}. We generalize these results for $\F$-Wythoff in the next two corollaries. The \emph{$k$-sequence} of $\F$-Wythoff is the sequence $\{(a_n,b_n)\}_{n \geq 0}$ of positions whose Sprague-Grundy values are $k$ in which $0 \leq a_n \leq b_n$ and $a_n < a_m$ for $n < m$. We first describe a recursive characterization of the first entries $a_i$ in the $k$-sequence of $\F$-Wythoff.

\smallskip
\begin{corollary} \label{a_i}
For each $k \geq 0$, consider the $k$-sequence $\{(a_n,b_n)\}_{n \geq 0}$ of $\F$-Wythoff. We have $a_n = mex\{a_i, b_i | 0 \leq i < n\}$ for each $n$.
\end{corollary}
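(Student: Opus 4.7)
The plan is to prove the mex identity by setting $m_n = mex\{a_i, b_i \mid 0 \leq i < n\}$ and establishing the two inequalities $m_n \geq a_n$ and $m_n \leq a_n$ separately. The entire argument will rest on Theorem \ref{Row} (every row of the array $\mathbb{A}$ contains every Sprague-Grundy value exactly once), which is the only structural input needed. The base case $a_0 = 0$ falls out immediately: by Theorem \ref{Row} applied to row $0$ there is a unique $b$ with $\G(0,b) = k$, so $(0,b)$ is the first element of the $k$-sequence, whence $a_0 = 0 = mex\{\,\}$.

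For the direction $m_n \geq a_n$, I will show that every integer $c$ with $0 \leq c < a_n$ already appears in $\{a_i, b_i \mid i < n\}$. Theorem \ref{Row} furnishes a unique $b$ with $\G(c,b) = k$; by the symmetry $(a,b) = (b,a)$ of $\F$-Wythoff, the ordered pair $(\min(c,b), \max(c,b))$ is an element of the $k$-sequence, say $(a_j, b_j)$. Its smaller coordinate is at most $c < a_n$, so strict monotonicity of $\{a_i\}$ forces $j < n$. If $c \leq b$ then $c = a_j$, and if $c > b$ then $c = b_j$; either way $c$ is in the mex set.

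For the reverse inequality $m_n \leq a_n$, it is enough to verify $a_n \notin \{a_i, b_i \mid i < n\}$. Strict monotonicity of $\{a_i\}$ rules out $a_n = a_i$ for $i < n$. If instead $a_n = b_i$ for some $i < n$, then $(a_i, a_n)$ has Sprague-Grundy value $k$, and by symmetry so does $(a_n, a_i)$; but $(a_n, b_n)$ is another such position lying in row $a_n$, and Theorem \ref{Row} forces these two to coincide, yielding $a_i = b_n$ in contradiction with $a_i < a_n \leq b_n$.

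The argument is essentially a repackaging of Theorem \ref{Row}, so I do not expect a serious obstacle. The one point requiring care is the case split $c \leq b$ versus $c > b$ in the first direction: it is precisely the second case that makes the inclusion of the $b_i$ in the mex set essential, and symmetry has to be invoked to bring $(c,b)$ into the canonical ordered form before identifying it with a term of the $k$-sequence.
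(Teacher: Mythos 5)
Your proof is correct and follows essentially the same route as the paper: both halves rest on Theorem \ref{Row} together with the completeness and strict monotonicity of the $k$-sequence, organized as two inequalities rather than the paper's two contradictory cases ($a_n < m$ and $a_n > m$). The only cosmetic difference is in ruling out $a_n = b_i$: the paper exhibits an explicit move between the two value-$k$ positions $(a_n,b_n)$ and $(a_i,a_n)$, while you invoke the uniqueness part of Theorem \ref{Row} in row $a_n$, which encodes the same fact.
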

\begin{proof}
Assume that there exists an integer $n > 0$ such that $a_n \neq m = mex\{a_i,b_i | 0 \leq i \leq n-1\}$. If $a_n < m$ then
$a_n \in \{a_i,b_i | 0 \leq i \leq n-1\}$ and so there exists $l < n$ such that $a_n = b_l$. This means there exists a move between the two positions $(a_n,b_n)$ and $(a_l,b_l)$ whose Sprague-Grundy values are $k$. This is a contradiction. Assume now that $a_n > m$. By Theorem \ref{Row}, there exists $m'$ (either $m \leq m'$ or $m' < m$) such that the position $(m,m')$ has Sprague-Grundy value $k$. This means there exists some $j < n$ such that $(m,m')$ is identical to $(a_j, b_j)$. It follows that $m \in mex\{a_i,b_i | 0 \leq i \leq n-1\}$ giving a contradiction. Hence, $a_n = mex\{a_i,b_i | 0 \leq i \leq n-1\}$ for all $n$.
\end{proof}

Note that Corollary \ref{a_i} is still true for those variants of Wythoff's game which satisfy Theorem \ref{Row}. Such games, including Wythoff's game, are discussed in \cite{ho}. Similarly, the first equation in the next corollary also holds for those variants of Wythoff's game which satisfy Theorem \ref{Row}.

\smallskip
\begin{corollary} \label{Z+}
For each $k \geq 0$, consider the $k$-sequence $\{(a_n,b_n)\}_{n \geq 0}$ of $\F$-Wythoff. We have
\begin{align*}
\begin{cases}
\{a_n | n \geq 0\} \cup \{b_n | n \geq 0\} = \Z_{\geq 0}, \\
|\{a_n | n \geq 0\} \cap \{b_n | n \geq 0\}| \leq 2,
\end{cases}
\end{align*}
where $|S|$ is the number of elements of the set $S$.
\end{corollary}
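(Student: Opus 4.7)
The plan is to deduce both statements from Theorem~\ref{Row} together with a short diagonal-move argument. For the first equality, fix any $c\in\Z_{\geq 0}$. Theorem~\ref{Row} (applied with first coordinate $c$ and value $g=k$) gives a unique $d$ with $\G(c,d)=k$, and the ordered pair $(\min(c,d),\max(c,d))$ is then a term $(a_n,b_n)$ of the $k$-sequence. Since $c\in\{a_n,b_n\}$, we obtain $c\in\{a_n\mid n\geq 0\}\cup\{b_n\mid n\geq 0\}$; the reverse inclusion being trivial, this establishes the first equality.

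For the intersection, I plan to first identify its elements exactly: $c\in\{a_n\mid n\geq 0\}\cap\{b_n\mid n\geq 0\}$ if and only if $\G(c,c)=k$. The ``if'' direction is immediate since $(c,c)$ then appears as some $(a_n,b_n)$ with $a_n=b_n=c$. For the converse, suppose $c=a_n=b_m$; if $n\neq m$ then $(a_n,b_n)$ and $(a_m,b_m)$ would be two distinct terms of the $k$-sequence both containing $c$ as an entry, contradicting the uniqueness guaranteed by Theorem~\ref{Row}; hence $n=m$ and $(a_n,b_n)=(c,c)$.

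It therefore suffices to bound $|\{c\geq 0\mid\G(c,c)=k\}|$ by $2$. Suppose $0<c_1<c_2$ both belong to this set. On the diagonal with positive entries the integer ratio $\lfloor c/c\rfloor$ equals $1$, so removing $c_2-c_1$ tokens from both piles of $(c_2,c_2)$ is a legal $\F$-Wythoff move terminating at $(c_1,c_1)$. Since a position and its follower must have distinct Sprague-Grundy values, this contradicts $\G(c_1,c_1)=\G(c_2,c_2)=k$; hence at most one positive integer $c$ can lie in the set. Together with the possible extra contribution of $c=0$ (which can occur only when $k=0$, since $\G(0,0)=0$), the bound of $2$ follows. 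The delicate step is precisely justifying the legality of this diagonal move: it relies on both endpoints lying strictly inside the region where the integer ratio is defined and equal to $1$, which is exactly why $c=0$ must be treated separately and why the bound is $2$ rather than $1$.
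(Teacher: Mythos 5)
Your proof is correct and follows essentially the same route as the paper: the union statement comes from Theorem~\ref{Row}, and the intersection bound comes from observing (via the uniqueness in Theorem~\ref{Row}) that elements of the intersection correspond to diagonal positions of value $k$, of which at most one can be positive because the diagonal move from $(c_2,c_2)$ to $(c_1,c_1)$ is legal when $0<c_1<c_2$. Your version merely reorganizes the paper's three-element contradiction into an exact characterization of the intersection plus the explicit remark that $c=0$ contributes only when $k=0$, which is a harmless refinement.
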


\begin{proof}
The first equation holds by Theorem \ref{Row}. We now prove the second equation. Assume, by a contradiction, that $|\{a_n | n \geq 0\} \cap \{b_n | n \geq 0\}| \geq 3$. Let $x, y,z$ be three elements in the intersection $\{a_n | n \geq 0\} \cap \{b_n | n \geq 0\}$ such that $x < y < z$ . Then there exist $a_1 \leq x \leq b_1, a_2 \leq y \leq b_2,a_3 \leq z \leq b_3$ such that the six positions $(a_1, x)$, $(x,b_1)$, $(a_2,y)$, $(y,b_2)$, $(a_3, z)$, and $(z,b_3)$ all have Sprague-Grundy value $k$. We have then $a_1 = b_1$, $a_2 = b_2$, and $a_3 = b_3$. Note that $a_2, a_3 > 0$ and so one can move from $(a_3,b_3)$ to $(a_2, b_2)$. This is a contradiction as these two positions belong to the $k$-sequence.
\end{proof}


We now return to a discussion of the Sprague-Grundy values in each row of the array $\mathbb{A}$. A sequence $(s_i)$ is said to be \emph{ultimately additively periodic} if there exist $N,p > 0$ such that $s_{n+p} = s_n+p$ for all $n \geq N$. Based on our computer explorations, we conjecture that this periodicity holds for each row in the array $\mathbb{A}$.

\smallskip
\begin{conjecture} \label{add-per}
Let $a \geq 0$. The sequence $\{\G(a,n)_{n \geq 0}\}$ is ultimately additively periodic.
\end{conjecture}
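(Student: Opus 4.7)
The plan is a strong induction on $a$, combined with a finite-state analysis of the mex recursion driven by the column periodicities of earlier rows.

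First I would establish that for $n\geq a^2$ no diagonal move is available from $(a,n)$: writing $n=qa+r$ with $0\leq r<a$, the legality condition $\lfloor(n-l)/(a-l)\rfloor=q$ for $1\leq l<a$ reduces to $l<(a-r)/q$, which has no solution with $l\geq 1$ once $q\geq a$. Hence, for $n\geq a^2$,
\[
\G(a,n)=mex\bigl(C_n\cup R_n\bigr),\qquad C_n:=\{\G(a',n):a'<a\},\quad R_n:=\{\G(a,n'):n'<n\}.
\]
The base $a=0$ is trivial ($\G(0,n)=n$). For the inductive step, assume the conjecture for rows $0,\ldots,a-1$; ultimate additive periodicity of these rows yields $E=E(a)$ with $C_n\subseteq[n-E,n+E]$ for all $n$, and a common additive period $P$ with threshold $N_0$ so that $C_{n+P}=C_n+P$ for $n\geq N_0$.

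The crux of the argument is a \emph{bounded-deviation lemma} for row $a$: there exists $D=D(a)$ with $|\G(a,n)-n|\leq D$ for every $n$. The upper bound $\G(a,n)\leq n+2a$ is an immediate pigeonhole estimate from $|R_n|=n$, $|C_n|=a$, and at most $a-1$ diagonal contributions. For the lower bound, define the deficit $d(n):=n-mex(R_n)\geq 0$, with $d(0)=0$. In the post-transient regime $n\geq a^2$ the deficit grows by at most $1$ per step, and only when $u(n):=mex(R_n)\in C_n$; since $u(n)=n-d(n)$ and $C_n\subseteq[n-E,n+E]$, this requires $d(n)\leq E$, so once $d(n)>E$ the deficit cannot grow. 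Combined with the trivial transient bound $d(n)\leq n\leq a^2$, one obtains $d(n)\leq\max(a^2,E+1)$ for all $n$, whence $\G(a,n)\geq u(n)\geq n-\max(a^2,E+1)$.

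Granted bounded deviation, I would encode the state at time $n$ by
\[
\sigma(n)=\Bigl(\{v-n:v\notin R_n,\ v\leq n+D\},\ (C_n-n)\Bigr),
\]
whose first coordinate is a subset of $\{-D,\ldots,D\}$ and whose second takes at most $P$ values for $n\geq N_0$. Thus $\sigma$ ranges over a finite set, and by pigeonhole there exist $n_1<n_2$ with $n_2-n_1$ a positive multiple of $P$ and $\sigma(n_1)=\sigma(n_2)$. A direct induction on $j\geq 0$ then gives $\G(a,n_2+j)=\G(a,n_1+j)+(n_2-n_1)$, since the mex inputs at the two times agree after the shift by state equality and column periodicity; this delivers additive period $p=n_2-n_1$ from threshold $n_1$, closing the induction.

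The hard part will be making the state-based machinery watertight, in particular verifying that $\sigma(n)$ really does determine both the next mex and the next state: this hinges on the deviation bound $D$ being at least as large as $E$, so that the mex is always found inside the window $[n-D,n+D]$ rather than requiring global information about $R_n$. Extra care is needed in the transient regime $n<a^2$, where the diagonal moves genuinely contribute to the mex and couple row $a$ with its own past; the crude bound $D=O(a^2)$ produced above is almost certainly not tight (the table suggests $D=O(a)$), but any finite $D$ is enough for the finite-state argument to deliver some additive period.
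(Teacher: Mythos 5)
The paper offers no proof to compare against: this statement is left as Conjecture~\ref{add-per}, supported only by computer exploration (cf.\ Problem~\ref{cha-add-per}), so your argument has to be judged on its own merits. On that basis the outline is sound and, written out carefully, would settle the conjecture. The three pillars all check: (1) for $n\ge a^2$ the diagonal move is indeed unavailable from $(a,n)$, since writing $n=qa+r$ the legality condition for removing $l$ tokens from both piles is $1\le l<(a-r)/q$, which is vacuous once $q\ge a$; hence $\G(a,n)=mex(C_n\cup R_n)$ in that range. (2) The deficit argument is correct: in that range the deficit increases (by exactly $1$) precisely when $mex(R_n)\in C_n$, which forces $d(n)\le E$, so $d(n)\le\max(a^2,E+1)$ for all $n$; together with the easy bound $\G(a,n)\le n+2a$ this gives the two-sided deviation bound. (3) With $D\ge\max(a^2,E+1,2a)$ the value $n+D$ lies outside both $R_n$ and $C_n$, so $mex(C_n\cup R_n)$ falls inside the window $[n-D,n+D]$ and both $\G(a,n)-n$ and the next first coordinate of $\sigma$ are functions of $\sigma(n)$; the column data enters as an input that is $P$-periodic beyond $N_0$ by the inductive hypothesis, and pigeonholing within a single residue class modulo $P$ beyond $\max(N_0,a^2)$ produces $n_1<n_2$ with equal states, whence $\G(a,n+p)=\G(a,n)+p$ for $p=n_2-n_1$ and all $n\ge n_1$. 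The only items to nail down are exactly those you flag: take $D$ to dominate $2a$ as well as $a^2$ and $E+1$ (the upper deviation bound is what guarantees $n+D\notin R_n$), and align all thresholds before the pigeonhole; neither is a real obstacle. This is in the spirit of the additive-periodicity arguments of \cite{Dress, landman} that the paper cites for Wythoff's game, and your method (induction on rows, bounded deviation, finite automaton driven by a periodic column input) is general enough that it also speaks to Problem~\ref{cha-add-per}.
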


Recall that ultimately additive periodicity also holds for Wythoff's game \cite{Dress, landman}. Our computer explorations show that this periodicity is common in variants of Wythoff's game.  (See \cite{ho}.) This lead us to the following problem.

\smallskip
\begin{problem} \label{cha-add-per}
Characterize variants of Wythoff's game whose nim-sequences $\{\G(a,n)\}_{n \geq 0}$ are ultimately additively periodic for all $a$.
\end{problem}

We now discuss the distribution of Sprague-Grundy values of $\F$-Wythoff in each diagonal parallel to the main diagonal in the array $\mathbb{A}$. It is well known that each such diagonal for Wythoff's game contains every nonnegative integer \cite{blass}. Based on our computer explorations, the same result is conjectured for $\F$-Wythoff.

\smallskip
\begin{conjecture} \label{Dia}
Let $a, g$ be nonnegative integers. There exists a unique integer $b$ such that $\G(b,a+b) = g$.
\end{conjecture}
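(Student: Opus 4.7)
The plan is to mimic the proof of Theorem~\ref{Row}, replacing ``row'' by ``diagonal parallel to the main diagonal''. Fix $a \geq 0$ and look at $D_a = \{\G(b, a+b)\}_{b \geq 0}$. For uniqueness, suppose $b_1 < b_2$ both satisfy $\G(b_i, a+b_i) = g$. A type-(ii) move removing $b_2 - b_1$ tokens from each pile would send $(b_2, a+b_2)$ to $(b_1, a+b_1)$, and is legal precisely when $\lfloor (a+b_1)/b_1 \rfloor = \lfloor (a+b_2)/b_2 \rfloor$; a short computation in the spirit of Lemma~\ref{U-lem} shows both floors equal $1$ as soon as $b_1 > a$, so tail-uniqueness is automatic and only a finite head $b \leq a$ needs separate case analysis.

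For existence I would argue by contradiction. Let $g$ be the smallest nonnegative integer missing from $D_a$, and pick $b_0$ minimal with $\{0, \ldots, g-1\} \subseteq \{\G(b, a+b) : b < b_0\}$. For $s \geq 1$ set $b_s = b_0 + s(a + b_0 + 1)$ and examine the mex set at $(b_s, a+b_s)$. The type-(ii) followers $(b_s - l, a + b_s - l)$ all remain on diagonal $a$, so by hypothesis none has SG value $g$. The type-(i) moves on pile~$2$ reach $(b_s, y)$ for every $y < a+b_s$, and Theorem~\ref{Row} together with the large choice of $b_s$ forces these to realize every SG value in $\{0, \ldots, g-1\}$. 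Combined with $\G(b_s, a+b_s) \neq g$, this gives $\G(b_s, a+b_s) > g$, so $g$ lies in the mex set. The type-(ii) branch being excluded, the $g$-follower is a type-(i) move: either $(b_s - i_s, a+b_s)$ on diagonal $a + i_s$, or $(b_s, y_s)$ with $y_s < a+b_s$ on diagonal $a + b_s - y_s$.

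The contradiction should come from pigeonhole on these type-(i) witnesses. For the pile-$2$ branch, row-uniqueness (Theorem~\ref{Row}) pins $y_s$ down uniquely as the column in row $b_s$ carrying SG value $g$. For the pile-$1$ branch one wants to show that $i_s$ is bounded so that some row $b_s - i_s$ recurs for $s_1 < s_2$; then two distinct columns of that row would both carry SG value $g$, contradicting Theorem~\ref{Row}. The main obstacle, and in my view the reason this is left as a conjecture, is that no obvious bound on $i_s$ is available: unlike $l_s < a$ in the proof of Theorem~\ref{Row}, which is bounded by the constant $a$, the index $i_s$ here may a~priori grow with $s$. A natural reduction is to condition on Conjecture~\ref{add-per}, since ultimate additive periodicity of rows would pin down the location of the $g$-witness modulo the period and restore the pigeonhole; without such extra structure one would need a genuinely new argument about the involution $c \mapsto \beta_g(c)$ determined by $\G(c, \beta_g(c)) = g$, specifically about the range of $|\beta_g(c) - c|$, which is where I expect the real work to lie.
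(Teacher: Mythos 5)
You have not actually proved the statement, but neither does the paper: this is left as a conjecture there, and the only things the paper establishes are the case $g=0$ (via the explicit $\P$-position $(\lfloor\phi a\rfloor+1,\lfloor\phi a\rfloor+a+1)$ on diagonal $a$) and the existence half of the case $a=0$. Your diagnosis that transplanting the proof of Theorem~\ref{Row} fails because the pile-1 witness index $i_s$ admits no a priori bound is exactly consistent with the paper, which states it has been unable to prove the conjecture for any $a\geq 1$. So on the general case you and the paper are in the same place; your suggestion of conditioning on Conjecture~\ref{add-per} is a reasonable reduction the paper does not make.

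What you miss is the one piece the paper does prove, and its $a=0$ argument is structurally different from your arithmetic-progression pigeonhole. Rather than taking $b_s=b_0+s(a+b_0+1)$, the paper uses Theorem~\ref{Row} to bound, by some $T_0$, the columns at which value $g$ can occur in the finitely many rows $s\leq b_0$, and then examines the single diagonal position $(m,m)$ with $m=T_0+1$. At $a=0$ the equal-removal moves from $(m,m)$ are unrestricted (both integer ratios equal $1$), so they reach every $(j,j)$ with $1\leq j\leq m-1$; hence $\{0,\dots,g-1\}$ lies in the mex set, $\G(m,m)>g$, and there is a witness $\G(m,i_0)=g$ with $b_0<i_0<m$. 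Running the same argument once more at the diagonal position $(i_0,i_0)$ yields a second witness $(i_0,j_0)$ with $j_0<i_0$, and the legal single-pile move from $(m,i_0)$ to $(i_0,j_0)$ connects two positions of value $g$ --- a contradiction. This two-step descent exploits precisely the $a=0$ symmetry (the smaller coordinate of a witness is itself the index of a diagonal position), which is what disappears for $a\geq 1$; your identification of that as the obstruction is correct.

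One concrete gap in your general-$a$ sketch, independent of the pigeonhole issue: the claim that the pile-2 followers $(b_s,y)$ with $y<a+b_s$ realize every value in $\{0,\dots,g-1\}$ ``by Theorem~\ref{Row} and the large choice of $b_s$'' is unjustified. The ratio condition blocks equal removals from $(b_s,a+b_s)$ down to $(b,a+b)$ once $b\leq a$, and the unique column carrying a prescribed value in row $b_s$ may lie beyond $a+b_s$ (for instance the $0$ of row $b_s$ sits near column $\phi b_s$ whenever $b_s$ is of the form $\lfloor\phi n\rfloor+1$), so the step $\G(b_s,a+b_s)>g$ needs a genuine argument; in the paper's $a=0$ case it is free because the unrestricted diagonal moves themselves supply $\{0,\dots,g-1\}$. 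Finally, on uniqueness: your observation that equal removals are legal along diagonal $a$ once the smaller pile exceeds $a$ does give tail uniqueness, but note that for $a=0$, $g=0$ both $(0,0)$ and $(1,1)$ have value $0$, so the uniqueness claim must in any case exclude such a degenerate head, which the paper's statement glosses over.
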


Note that $\G(\lfloor \phi a \rfloor + 1,\lfloor \phi a \rfloor+a + 1) = 0$ and so Conjecture \ref{Dia} holds for $g = 0$. We give here the proof of Conjecture \ref{Dia} for the case $a = 0$.

\begin{proof}[Proof of Conjecture \ref{Dia} for the case $a = 0$]
Assume that $g > 0$ and assume by contradiction that the sequence $\{\G(n,n)\}_{n \geq 0}$ does not contain $g$. We can assume that $g$ is the smallest integer not in that sequence. Then there exists the smallest integer $b_0 > 0$ such that
\begin{align} \label{BW-b00}
\{0,1,\ldots, g-1\} \subseteq \{\G(i,i) | i \leq b_0-1\}.
\end{align}

For each $s \leq b_0$, there exists at most one value $t_s \geq b_0$ such that $\G(s,t_s) = g$. Let $S$ be the set of the values $t_s$, and set
\[
T_0 = \begin{cases}\max(S),&\ \text{if}\ S\not=\emptyset;\\
b_0, &\ \text{otherwise}.
\end{cases}
\]
Then $\G(s,t) \neq g$ for $s \leq b_0, t > T_0$. Note that $\G(i,i) \neq g$ for all $i$. Set $m = T_0+1$. We have
\[
\G(m,m) = mex\{\G(m,i), \G(j,j) | i, 1 \leq j \leq m-1\}.
\]
By (\ref{BW-b00}), $\G(m,m) \geq g$ and so $\G(m,m) > g$ as $\G(m,m) \neq g$. Since $\G(j,j) \neq g$ for all $j$, there exists $i_0 \leq m-1$ such that $\G(m,i_0) = g$. Note that $i_0 > b_0$ as otherwise $m \leq T_0$ giving a contradiction with $m = T_0+1$. We have
\[
\G(i_0,i_0) = mex\{\G(i_0,j), \G(l,l) | j \leq i_0-1, 1 \leq l \leq i_0-1\}.
\]
By (\ref{BW-b00}), $\G(i_0,i_0) \geq g$ and so $\G(i_0,i_0) > g$ as $\G(i_0,i_0) \neq \G(i_0,m) = g$ by Theorem \ref{Row}. Since $\G(l,l) \neq g$ for all $l$, there exists $j_0 < i_0$ such that $\G(i_0,j_0) = g$. However, there exists a move from $(m,i_0)$ to $(i_0,j_0)$ as $m > i_0 > j_0$. This is a contradiction.
\end{proof}

So far, we haven't been able to prove the conjecture for any given $a \geq 1$.

\section{$\F$-Wythoff in mis\`{e}re play}

Recall that in the game we have discussed so far, a player wins if (s)he makes the last move. This is the \emph{normal} convention. Oppositely, in \emph{mis\`{e}re} convention, a player is declared to be the winner if (s)he forces the opponent to make the last move.

In this section, we study $\F$-Wythoff played under the mis\`{e}re convention. We show that $\F$-Wythoff and mis\`{e}re $\F$-Wythoff swap Sprague-Grundy values 0 and 1 while agreeing for all other Sprague-Grundy values. We first show that the $\P$-positions of mis\`{e}re $\F$-Wythoff are exactly those positions which have Sprague-Grundy value 1 of $\F$-Wythoff while those positions which have Sprague-Grundy value 1 of mis\`{e}re $\F$-Wythoff are exactly $\P$-positions of $\F$-Wythoff. The proofs of the following two theorems are essential the same as those of Theorems \ref{FW-P} and \ref{V1}, respectively, and so we omit the proofs.

\smallskip
\begin{theorem} \label{P-Mis}
The position $(a,b)$ with $a \leq b$ is a $\P$-positions in mis\`{e}re $\F$-Wythoff if and only if it is an element of the set
\[\{(0,1), (\lfloor \phi n \rfloor +2, \lfloor \phi^2 n \rfloor +2) | n \geq 0\}.\]
\end{theorem}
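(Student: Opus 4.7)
The plan is to imitate the structure of the proof of Theorem~\ref{V1}, exploiting the fact that the proposed $\P$-set for misère play is exactly the set $\B = \{(0,1), (\lfloor\phi n\rfloor+2, \lfloor\phi^{2} n\rfloor+2) : n \geq 0\}$ of Sprague--Grundy-value-$1$ positions of normal $\F$-Wythoff. Conceptually, switching to the misère convention flips the status of the terminal position $(0,0)$ from $\P$ to $\N$ (the player with no move wins by default); this in turn promotes $(0,1)$, whose only follower is $(0,0)$, to a $\P$-position, and the shift propagates through the whole $\P$-skeleton. I would verify the two standard $\P$-position conditions.

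For the closure step (no move from $\B$ terminates in $\B$), the case analysis in Theorem~\ref{V1}(ii) already handles both moves between pairs of the form $(\lfloor\phi n\rfloor+2, \lfloor\phi^{2} n\rfloor+2)$ and moves from such a position to $(0,1)$; the only additional observation is that the sole move from $(0,1)$ leads to $(0,0) \notin \B$, which is trivial. For the reachability step, take $(a, b) \notin \B$ with $(a, b) \neq (0,0)$ and assume WLOG $a \leq b$. If $a \leq 1$, then emptying the appropriate pile reaches $(0,1) \in \B$ in one move. If $a, b \geq 2$, I would rerun the Theorem~\ref{V1}(iii) argument: $(a - 2, b - 2)$ is not a Wythoff $\P$-position (else $(a, b) \in \B$ by definition), so Theorem~\ref{W-P} produces a Wythoff move $(a - 2, b - 2) \to (\lfloor\phi m\rfloor, \lfloor\phi^{2} m\rfloor)$, and Lemma~\ref{U-lem} confirms that the shifted $\F$-Wythoff move $(a, b) \to (\lfloor\phi m\rfloor + 2, \lfloor\phi^{2} m\rfloor + 2) \in \B$ respects the integer-ratio constraint.

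The one point lying outside the literal scope of Theorem~\ref{V1}(iii) is that the reachability argument must now also cover positions $(a, b)$ that are $\P$-positions of \emph{normal} $\F$-Wythoff, namely those of the form $(\lfloor\phi n\rfloor + 1, \lfloor\phi^{2} n\rfloor + 1)$ with $n \geq 1$, which were excluded in the normal-play proof. For these I need to check that $(a - 2, b - 2) = (\lfloor\phi n\rfloor - 1, \lfloor\phi^{2} n\rfloor - 1)$ is still not a Wythoff $\P$-position: the coordinate difference equals $n$, which forces any hypothetical Wythoff match to be indexed by $m = n$, and that in turn collapses to the impossible equation $\lfloor\phi n\rfloor - 1 = \lfloor\phi n\rfloor$. (The remaining case $n = 0$, i.e., $(a, b) = (1,1)$, is already absorbed into the $a \leq 1$ branch above.) This verification is the only genuinely new ingredient, and I expect it to be the main---though minor---obstacle; beyond it, everything reduces to invocations of Theorem~\ref{V1}, Theorem~\ref{W-P}, and Lemma~\ref{U-lem}.
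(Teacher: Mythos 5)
Your proposal is correct and follows essentially the approach the paper intends: the paper omits this proof precisely because it is the same shift-by-two verification as in Theorems \ref{FW-P} and \ref{V1} (with the terminal position $(0,0)$ becoming an $\N$-position under the mis\`{e}re convention), and that is exactly what you carry out via Lemma \ref{U-lem} and Theorem \ref{W-P}. One remark: your ``genuinely new ingredient'' is actually redundant, since the reachability argument uses only $(a,b)\notin\B$ to conclude that $(a-2,b-2)$ is not a Wythoff $\P$-position, so the normal-play $\P$-positions $(\lfloor\phi n\rfloor+1,\lfloor\phi^2 n\rfloor+1)$ are already covered by the same argument verbatim.
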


\smallskip
\begin{theorem} \label{1-Mis}
The position $(a,b)$ with $a \leq b$ has Sprague-Grundy value 1 in mis\`{e}re $\F$-Wythoff if and only if it is an element of the set
\[\{(\lfloor \phi n \rfloor + 1, \lfloor \phi^2 n \rfloor + 1) | n \geq 0\}.\]
\end{theorem}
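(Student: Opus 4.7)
The plan is to follow, essentially verbatim, the structure of the proof of Theorem \ref{V1}, but with the roles of the normal and misère $\P$-positions exchanged. Let
\[
\mathcal{S} = \{(\lfloor\phi n\rfloor+1,\lfloor\phi^2 n\rfloor+1)\mid n\ge 0\}
\]
and
\[
\mathcal{P}^{-} = \{(0,1)\}\cup\{(\lfloor\phi n\rfloor+2,\lfloor\phi^2 n\rfloor+2)\mid n\ge 0\},
\]
the latter being the misère $\P$-positions supplied by Theorem \ref{P-Mis}. Exactly as in Theorem \ref{V1}, the standard Sprague-Grundy characterization reduces the task to three verifications: (i) $\mathcal{S}\cap\mathcal{P}^{-}=\emptyset$; (ii) no legal move of $\F$-Wythoff sends a position of $\mathcal{S}$ to another position of $\mathcal{S}$; and (iii) from every position $(a,b)\notin\mathcal{S}\cup\mathcal{P}^{-}$ with $(a,b)\ne(0,0)$, there is a legal move to a position of $\mathcal{S}$.

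Step (i) is a short calculation: equating a generic element of $\mathcal{S}$ with $(\lfloor\phi m\rfloor+2,\lfloor\phi^2 m\rfloor+2)$ and using the identity $\lfloor\phi^2 k\rfloor=\lfloor\phi k\rfloor+k$ forces $n=m$ and then the contradictory $0=1$; equality with $(0,1)$ is impossible because $\lfloor\phi n\rfloor+1\ge 1$. Step (ii) is free: $\mathcal{S}$ is exactly the set of $\P$-positions of normal $\F$-Wythoff (Theorem \ref{FW-P}) with $(0,0)$ removed, so the fact, already established in the proof of Theorem \ref{FW-P}, that no move connects two normal $\P$-positions immediately yields (ii).

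Step (iii) is the core and uses the translation trick from the proof of Theorem \ref{FW-P}. Given $(a,b)\notin\mathcal{S}\cup\mathcal{P}^{-}$ with $a\le b$ and $a\ge 1$, the shifted position $(a-1,b-1)$ cannot be of the Wythoff form $(\lfloor\phi n\rfloor,\lfloor\phi^2 n\rfloor)$, for otherwise $(a,b)$ would lie in $\mathcal{S}$. Theorem \ref{W-P} then supplies a Wythoff move from $(a-1,b-1)$ to some $(\lfloor\phi m\rfloor,\lfloor\phi^2 m\rfloor)$, and adding $1$ to each coordinate lifts it to the desired $\F$-Wythoff move $(a,b)\to(\lfloor\phi m\rfloor+1,\lfloor\phi^2 m\rfloor+1)\in\mathcal{S}$; the integer-ratio restriction, whenever it applies, is preserved by Lemma \ref{U-lem}.

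The main obstacle, and the point at which the misère situation genuinely diverges from the normal-play template, is the boundary case $a=0$. A position $(0,b)$ with $b\ge 2$ lies outside $\mathcal{S}\cup\mathcal{P}^{-}$ but has no follower in $\mathcal{S}$, since every element of $\mathcal{S}$ has positive first coordinate. These positions, together with the terminal $(0,0)$, must be treated directly using the misère convention at the terminal: an easy induction on $b$ then gives that $(0,b)$ has misère Sprague-Grundy value $b$ for $b\ge 2$, so such positions have Sprague-Grundy value different from $1$ as required. This is the only point where the proof deviates from the structure of Theorem \ref{V1}, and it is where I expect to need the most care.
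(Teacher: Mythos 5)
Your overall route is the intended one (the paper omits this proof, stating it is essentially that of Theorem \ref{V1}), and your observation that the column $a=0$ must be handled separately in mis\`{e}re play is correct and genuinely needed: unlike in Theorem \ref{V1}, a position $(0,b)$ with $b\ge 2$ has no follower in $\mathcal{S}$, and your direct computation $\G^-(0,b)=b$ for $b\ge 2$ disposes of those positions. The genuine gap is the terminal position $(0,0)$, which you say is ``treated directly'' but never actually dispose of. Under the paper's own description of mis\`{e}re play (adjoin a sink $v$ below each terminal vertex of $\Gamma$), one has $\G^-(0,0)=mex\{\G^-(v)\}=mex\{0\}=1$; indeed your own induction along the column $a=0$ needs exactly this value as its base case (it is what makes $\G^-(0,1)=0$ and $\G^-(0,2)=2$). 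Since $(0,0)\notin\mathcal{S}$, the ``only if'' direction fails at $(0,0)$, and no argument can close this as the statement is literally printed: the set should also contain $(0,0)$, i.e.\ it should be the full set of $\P$-positions of normal $\F$-Wythoff from Theorem \ref{FW-P}. This is also what Theorem \ref{Str.Mis} forces, since strong miserability gives $\G^-(0,0)=1-\G(0,0)=1$.

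Apart from this one position, your argument is sound and matches the paper's template: step (i) is the same computation as in Theorem \ref{V1}; step (ii) follows from part (i) of the proof of Theorem \ref{FW-P}, noting that no element of $\mathcal{S}$ has $(0,0)$ as a follower (the diagonal move from $(1,1)$ to $(0,0)$ is excluded by the ratio condition), so the troublesome position never enters the follower analysis; and the shift-by-one lifting with Lemma \ref{U-lem} in step (iii) works for all $a\ge 1$. To finish, either amend the statement to include $(0,0)$ (the natural fix, consistent with the swap of values $0$ and $1$ asserted in Section 4) or state explicitly a convention excluding the terminal position, and then your case analysis is complete.
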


We now go further to show that $\F$-Wythoff and its mis\`{e}re version differ on those positions which have Sprague-Grundy value 0 and 1 via a swap. An impartial game can be described as a finite directed acyclic graph without multiple edges in which each vertex is a position and each downward edge is a move. Note that if a game $G$, under the normal convention, is described as a graph $\Gamma$, then the mis\`{e}re version of $G$ can be described as the graph $\Gamma^-$ obtained from $\Gamma$ by adding one extra vertex $v$ and an edge downward from each final vertex (vertex without outgoing edge) in $\Gamma$ to $v$.

For an impartial game $G$, denote by $\G_G$ and $\G^-_G$ the Sprague-Grundy functions for $G$  and its mis\`{e}re version, respectively. If there exist some subset $V_0$ of $\P$-positions and $V_1$ of those positions which have Sprague-Grundy value 1 of $G$ such that $\G_G(p) + \G^-_G(p) = 1$ if $p \in V_0 \cup V_1$ and $\G_G(p) = \G^-_G(p)$ otherwise, then $G$ is said to be \emph{miserable}. If $V_0$ coincides with the $\P$-positions and $V_1$ coincides with those positions which have Sprague-Grundy value 1 of $G$, then $G$ is said to be \emph{strongly miserable}. Several miserable and strongly miserable impartial games are studied in \cite{RRR}, including Wythoff's game. Gurvich has shown that Wythoff's game is miserable but not strongly miserable \cite{RRR}. We now show that strong miserability holds for $\F$-Wythoff.

\smallskip
\begin{theorem} \label{Str.Mis}
The game $\F$-Wythoff is strongly miserable.
\end{theorem}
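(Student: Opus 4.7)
The plan is to establish, by induction on the depth of a position $p$ in the game DAG of $\F$-Wythoff, the strengthened statement that $\G^-(p) = 1 - \G(p)$ whenever $\G(p) \in \{0,1\}$ and $\G^-(p) = \G(p)$ whenever $\G(p) \geq 2$. Strong miserability is then immediate from the definition.

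First I would pin down the swap at Sprague-Grundy values $0$ and $1$. Comparing the explicit lists in Theorems \ref{V1} and \ref{P-Mis} shows that the normal value-$1$ positions and the misère $\P$-positions are literally the same set, so $\G(p) = 1$ if and only if $\G^-(p) = 0$. Similarly, the lists in Theorems \ref{FW-P} and \ref{1-Mis} coincide except at the terminal $(0,0)$, which I would handle by hand: from the misère-graph definition, $\G^-(0,0) = mex\{\G^-(v)\} = mex\{0\} = 1 = 1 - \G(0,0)$, where $v$ is the adjoined final vertex of $\Gamma^-$. This completes the swap for all $p$ with $\G(p) \leq 1$ and provides the base of the induction.

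For the inductive step, let $\G(p) = k \geq 2$ and assume the result for every follower of $p$. By the swap just established together with the induction hypothesis, the set of misère Sprague-Grundy values attained at followers of $p$ equals
\[
\{\,1 - \G(q) : q \text{ a follower with } \G(q) \leq 1\,\} \cup \{\,\G(q) : q \text{ a follower with } \G(q) \geq 2\,\}.
\]
Since $\G(p) = k$, every integer in $\{0, 1, \ldots, k-1\}$ occurs as $\G(q)$ for some follower $q$, and after the swap at $\{0,1\}$ every such integer still appears as some $\G^-(q)$. Conversely, if some follower $q^*$ satisfied $\G^-(q^*) = k$, then $\G(q^*) \notin \{0,1\}$ (else $\G^-(q^*) \in \{0,1\}$), so the induction hypothesis forces $\G(q^*) = k$, contradicting $\G(p) = k$. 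Therefore $\G^-(p) = mex\{\G^-(q) : q \text{ a follower of } p\} = k$, as required.

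The main obstacle, such as it is, is the bookkeeping in the first step: one must carefully match the four explicit Beatty-sequence lists from Theorems \ref{FW-P}, \ref{V1}, \ref{P-Mis}, and \ref{1-Mis} after swapping $0$ and $1$, and treat the exceptional terminal $(0,0)$ separately via the misère-graph definition. Once that swap is secured, the inductive step for $k \geq 2$ is a transparent mex computation that uses only the induction hypothesis and the fact that no follower of $p$ has the same normal Sprague-Grundy value as $p$.
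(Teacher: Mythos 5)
Your proof is correct and follows essentially the same route as the paper: it takes the explicit characterizations of the mis\`{e}re value-$0$ and value-$1$ positions (Theorems \ref{P-Mis} and \ref{1-Mis}) to secure the swap at Sprague-Grundy values $0$ and $1$, and then runs an induction over the game graph to show that the two Sprague-Grundy functions agree on all values at least $2$ (the paper inducts on the height of a vertex and phrases the claim as $\G^-(v)\geq 2 \Rightarrow \G(v)=\G^-(v)$, the mirror image of your version, but this is the same argument). Your explicit treatment of the terminal position $(0,0)$ via the adjoined sink is a nice touch, since the list in Theorem \ref{1-Mis} omits it.
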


\begin{proof}
Let $\Gamma$ be the graph of $\F$-Wythoff. Consider the graph $\Gamma^-$ of mis\`{e}re $\F$-Wythoff obtained from $\Gamma$ with the extra sink $v_0$. For each vertex (position) $v$, the {\em height} $h(v)$ of $v$ is the length of the longest directed path from $v$ to the sink $v_0$. Denote by $\G^-$ the Sprague-Grundy function for mis\`{e}re $\F$-Wythoff. We will prove by induction on $h(v)$ that $\G^-(v) = \G(v)$ if $\G^-(v) \geq 2$. One can check that the claim is true for $h(v) \leq 2$. Assume that the claim is true for $h(v) \leq n$ for some $n \geq 2$. We show that the claim is true for $h(v) = n+1$. For each $k < \G^-(v)$, there exists $w_k$ such that $\G^-(w_k) = k$ and one can move from $v$ to $w_k$. By Theorems \ref{P-Mis}, \ref{1-Mis} and the inductive hypothesis, we have
\[\{\G(w_k)| 0 \leq k < \G^-(v)\} = \{0,1, \ldots, \G^-(v)-1\}.\]
Note that if there exists a move from $v$ to some $w$ in $\F$-Wythoff, then that move can also be made in mis\`{e}re $\F$-Wythoff. Moreover, by Theorems \ref{P-Mis}, \ref{1-Mis} and the inductive hypothesis, $\G(w) \neq \G^-(v)$. We have
\[\G(v) = mex\{\G(w) | \text{$w$ is a follower of $v$}\}.\]
Since the $mex$ set includes the set $\{0,1, \ldots, \G^-(v)-1\}$ but excludes $\G^-(v)$, $\G(v) = \G^-(v)$.
\end{proof}

Recall that Wythoff's game and several of its variants are either miserable or strongly miserable \cite{RRR}. Our computer explorations show that the two variants of Wythoff's game recently discussed in \cite{ho} are miserable. This commonness leads us to the following question and problem.

\smallskip
\begin{question}
Are all extensions of Wythoff's game either miserable or strongly miserable?
\end{question}

\smallskip
\begin{problem}
Characterize miserable or strongly miserable restrictions of Wythoff's game.
\end{problem}

\section{Variants of $\F$-Wythoff preserving its $\P$-positions}

In this section, we answer the question as to whether there exists either a restriction or an extension of $\F$-Wythoff preserving its $\P$-positions. For an impartial game, a move is said to be \emph{redundant} \cite{Ext-Res} if the elimination of this move from the game does not change the set of $\P$-positions. A move is therefore not redundant if there exists a position $p$ such that that move is the unique winning move from that position. Given an impartial game, a move can be added into the set of moves without changing the set of $\P$-positions if this move does not lead a $\P$-position to another $\P$-position. We will introduce in this section one restriction and one extension of $\F$-Wythoff preserving it $\P$-positions. The idea for this section comes from our recent work on variants of Wythoff's game preserving its $\P$-positions \cite{ho}.

Consider the restriction of $\F$-Wythoff which we call \emph{$\F_{\R}$-Wythoff} obtained as follow: if the two piles have different sizes, removing tokens from a single pile cannot be made on the smaller pile. The second game is an extension of $\F$-Wythoff obtained by adding an extra move as follows: from a position $(a,b)$ with $a \leq b$, one can remove $k$ tokens from the pile of size $a$ and remove $l \leq k$ tokens from the pile of size $b$ provided that the integer ratio of the two entries does not change. We call this extension \emph{$\F_{\E}$-Wythoff}. The proofs for the results in this section are quite similar to those for $\F$-Wythoff and we leave them for the reader.

\smallskip
\begin{theorem} \label{REF-P}
The $\P$-positions of $\F_{\R}$-Wythoff $($and $\F_{\E}$-Wythoff$)$ are identical to those of $\F$-Wythoff.
\end{theorem}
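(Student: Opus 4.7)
The plan is to verify, for each of $\F_{\R}$-Wythoff and $\F_{\E}$-Wythoff, that the $\P$-position set equals
\[\A = \{(0,0)\} \cup \{(\lfloor\phi n\rfloor+1,\lfloor\phi^2 n\rfloor+1) : n \geq 0\}\]
via the standard two checks used in Theorem \ref{FW-P}: (i) no legal move from a member of $\A$ lands back in $\A$, and (ii) every position outside $\A$ admits a legal move into $\A$.

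For $\F_{\R}$-Wythoff, condition (i) is inherited directly from Theorem \ref{FW-P} because $\F_{\R}$-Wythoff is obtained from $\F$-Wythoff by discarding moves. The work lies in (ii). I would reuse the argument of Theorem \ref{FW-P}: given $(a,b) \notin \A$ with $a \leq b$, consider $(a-1,b-1)$ and apply Wythoff's winning strategy, which yields a move of one of three kinds: decrease the big pile, decrease the small pile, or diagonal. The first and third translate into legal $\F_{\R}$-Wythoff moves (the latter by Lemma \ref{U-lem}). The one troublesome case is when Wythoff's strategy is a small-pile decrease, i.e., $b-1 = \lfloor\phi^2 m\rfloor$ and $a-1 > \lfloor\phi m\rfloor$ for some $m$. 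In that situation I would substitute the diagonal move: setting $n = b-a$, the hypotheses $a \geq \lfloor\phi m\rfloor + 2$ and $b = \lfloor\phi m\rfloor + m + 1$ give $n \leq m - 1$, hence $\lfloor\phi n\rfloor \leq \lfloor\phi(m-1)\rfloor < \lfloor\phi m\rfloor < a - 1$. Therefore the move $(a,b) \to (\lfloor\phi n\rfloor+1, \lfloor\phi^2 n\rfloor+1)$ removes at least one token from each pile, preserves the integer ratio by Lemma \ref{U-lem}, and lands in $\A$. The degenerate cases $a \in \{0,1\}$ are settled by direct inspection: from $(0,b)$ one empties the big pile to reach $(0,0)$, and from $(1,b)$ with $b \geq 2$ one reduces the big pile to $1$ to reach $(1,1)$.

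For $\F_{\E}$-Wythoff, condition (ii) is inherited directly from Theorem \ref{FW-P} because $\F_{\E}$-Wythoff contains all moves of $\F$-Wythoff. The work lies in (i): one must verify that the new moves --- removing $k$ tokens from the small pile and $l \leq k$ from the big pile, preserving the integer ratio --- never connect two members of $\A$. A putative transition from $(\lfloor\phi n\rfloor+1, \lfloor\phi^2 n\rfloor+1)$ to $(\lfloor\phi m\rfloor+1, \lfloor\phi^2 m\rfloor+1)$ would force
\[k = \lfloor\phi n\rfloor - \lfloor\phi m\rfloor, \qquad l = \lfloor\phi^2 n\rfloor - \lfloor\phi^2 m\rfloor = k + (n - m),\]
so $k \geq 1$ demands $n > m$ whereas $l \leq k$ demands $n \leq m$, a contradiction. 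The only other candidate target is $(0,0)$, which forces $l = \lfloor\phi^2 n\rfloor + 1 \leq k = \lfloor\phi n\rfloor + 1$ and hence $n = 0$; but then $k = l = 1$, so the move is simply the equal-removal already ruled out by Theorem \ref{FW-P}.

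The main obstacle I anticipate is the case analysis for $\F_{\R}$-Wythoff ensuring the diagonal substitution is always available when the small-pile-decrease of Wythoff would be required; this hinges on the strict inequality $\lfloor\phi n\rfloor < \lfloor\phi m\rfloor$ for $n < m$, which follows from $\phi > 1$ but must be tracked carefully through the boundary cases at the bottom of the array.
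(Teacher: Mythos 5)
Your argument is correct, and it follows exactly the route the paper intends: the paper itself omits this proof (``quite similar to those for $\F$-Wythoff and we leave them for the reader''), and your two additions are precisely the content that needs to be supplied --- for $\F_{\R}$-Wythoff, replacing the forbidden small-pile move by the diagonal move to $(\lfloor\phi n\rfloor+1,\lfloor\phi^2 n\rfloor+1)$ with $n=b-a$, legal by Lemma \ref{U-lem} and nontrivial since $n\le m-1$ forces $\lfloor\phi n\rfloor<a-1$; for $\F_{\E}$-Wythoff, checking that the adjoined moves never join two members of $\A$, where the constraint $l\le k$ both forces the matching of entries (the larger pile stays the larger) and yields the contradiction $n>m$ versus $n\le m$. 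This matches the paper's approach, so no further comment is needed.
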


We now answer the question as to whether there exists a restriction of $\F_{\R}$-Wythoff preserving its $\P$-position.

\smallskip
\begin{theorem} \label{RF-Res}
There is no restriction of $\F_{\R}$-Wythoff preserving its $\P$-positions
\end{theorem}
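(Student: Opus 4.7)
The plan is to show directly that every legal move of $\F_{\R}$-Wythoff is non-redundant, by producing for each one a position at which it is the unique winning move. The moves fall into two families indexed by $k \geq 1$: type $A_k$ (remove $k$ tokens from the larger pile), and type $B_k$ (remove $k$ tokens from both piles subject to the ratio constraint). The $A_k$ case is handled by the position $(0,k)$: the smaller pile is empty so no $B$-move is legal, and among the followers $(0,k-j)$ for $1 \leq j \leq k$ only $(0,0)$ is a $\P$-position, reached precisely by $A_k$.

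For $B_k$, I would work with the candidate source $(a,b) = (\lfloor\phi n\rfloor + 1 + k,\; \lfloor\phi^2 n\rfloor + 1 + k)$ for an $n \geq 1$ chosen so that $\lfloor\phi n\rfloor + k = \lfloor\phi m\rfloor$ for some $m > n$. Such an $n$ exists for every $k \geq 1$: writing $\lfloor\phi(n+t)\rfloor - \lfloor\phi n\rfloor$ as $\lfloor\phi t\rfloor$ or $\lfloor\phi t\rfloor + 1$ according as $\{\phi n\} + \{\phi t\} < 1$ or $\geq 1$, one uses that $\{\phi n\}$ is dense in $[0,1)$ to realize both values for infinitely many $n$, together with the fact that $\bigcup_{t \geq 1} \{\lfloor\phi t\rfloor, \lfloor\phi t\rfloor + 1\} = \NN$ (since the two Beatty sequences $\{\lfloor\phi t\rfloor\}$ and $\{\lfloor\phi^2 t\rfloor\}$ partition $\NN$, no two consecutive integers both lie in the upper one). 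By Lemma \ref{U-lem}, the ratios of $(a,b)$ and of $(a-k,b-k) = (\lfloor\phi n\rfloor + 1, \lfloor\phi^2 n\rfloor + 1)$ both equal $1$, so $B_k$ is legal and lands at a $\P$-position.

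It remains to rule out every other winning move from $(a,b)$. For type $A$: the unique $\P$-position with first coordinate $a = \lfloor\phi m\rfloor + 1$ is $(\lfloor\phi m\rfloor + 1, \lfloor\phi^2 m\rfloor + 1)$, and the identity $\lfloor\phi^2 j\rfloor = \lfloor\phi j\rfloor + j$ gives $\lfloor\phi^2 m\rfloor + 1 - b = m - n > 0$, so this $\P$-position lies above $(a,b)$ and cannot be reached by reducing the larger pile. For $B_{k'}$ with $k' \neq k$: if $(a - k', b - k')$ equals some $\P$-position $(\lfloor\phi n'\rfloor + 1, \lfloor\phi^2 n'\rfloor + 1)$, then subtracting the two resulting equations and invoking the same identity forces $n' = n$ and hence $k' = k$, a contradiction; the case $(a - k', b - k') = (0,0)$ is excluded since $a < b$ whenever $n \geq 1$. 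Hence $B_k$ is the unique winning move from $(a,b)$, completing the argument that $\F_{\R}$-Wythoff has no restriction preserving its $\P$-positions. The main obstacle is the Beatty-sequence existence step for $n$; everything else is routine bookkeeping with Lemma \ref{U-lem}.
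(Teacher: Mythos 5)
Your proposal is correct and follows essentially the same strategy as the paper: show that each move of $\F_{\R}$-Wythoff (single-pile removal of $k$, and diagonal removal of $k$) is non-redundant by exhibiting an $\N$-position at which it is the unique move to a $\P$-position, using the same diagonal witness $(\lfloor\phi n\rfloor+1+k,\lfloor\phi^2 n\rfloor+1+k)$ with $\lfloor\phi n\rfloor+k=\lfloor\phi m\rfloor$ and the same tools (Lemma \ref{U-lem} and the Beatty partition). Two local differences: your witness $(0,k)$ for the single-pile move is simpler than the paper's $(2,3+k)$, and your uniqueness check for other diagonal moves via $b-a=n$ is a clean shortcut; on the other hand, your existence argument for $n$ invokes density/equidistribution of $\{\phi n\}$, which is heavier than needed --- the paper gets the same conclusion elementarily by observing that one of the two consecutive integers $3+k=\lfloor 2\phi\rfloor+k$ and $4+k=\lfloor 3\phi\rfloor+k$ must lie in the lower Beatty sequence, since gaps in the upper sequence are at least $2$. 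Either route is fine; the paper's is effective (it pins $n\in\{2,3\}$) and avoids appealing to an external density fact.
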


\begin{proof}
We will show that neither of the moves in $\F$-Wythoff is redundant. We need to show that for every positive integer $k$, the following two properties hold:
\begin{itemize}
\item [(i)]  there exists a winning position $(a,b)$ with $a < b$ such that removing $k$ tokens from the larger pile is the unique winning move;
\item [(ii)] there exists a winning position such that removing $k$ tokens from both piles is the unique winning move.
\end{itemize}

For (i), let $a = 2, b = 3+k$. Then $(a,b)$ is an $\N$-position. Moreover, by Theorem \ref{FW-P}, removing $k$ tokens from the larger pile is the unique winning move.

For (ii), we first claim that there exist positive integers $n$ and $m$ such that $\lfloor\phi n\rfloor + k = \lfloor\phi m\rfloor$. In fact, set $n_1 = \lfloor 2\phi\rfloor = 3$, $n_2 = \lfloor3\phi \rfloor = 4$, $m_1 = 3+k$, and $m_2 = 4 + k$. We show that either $m_1$ or $m_2$ is of the form $\lfloor\phi m\rfloor$ for some $m$. Assume by contradiction that neither $m_1$ nor $m_2$ is of the form $\lfloor\phi m\rfloor$. By Lemma \ref{Comp}, there exist $r_1 < r_2$ such that $m_1 = \lfloor\phi r_1\rfloor +r_1$, $m_2 = \lfloor\phi r_2\rfloor +r_2$. Note that $\lfloor\phi r_1\rfloor < \lfloor\phi r_2\rfloor$ and so
\[1 = m_2 - m_1  = \lfloor\phi r_2\rfloor +r_2 - (\lfloor\phi r_1\rfloor +r_1)
                 = \lfloor\phi r_2\rfloor - \lfloor\phi r_1\rfloor + r_2 - r_1 \geq 2 \]
giving a contradiction. Now, if $m_1 = \lfloor\phi m\rfloor$ (resp. $m_2 = \lfloor\phi m\rfloor$), let $n = 2$ (resp. $n = 3$). Then $n$ and $m$ satisfy the condition $\lfloor\phi n\rfloor + k = \lfloor\phi m\rfloor$.

Let $a = \lfloor\phi n\rfloor +1 + k$, $b = \lfloor\phi n\rfloor +n +1 + k$. Then $(a,b)$ is an $\N$-position and removing $k$ tokens from both piles is a winning move. (Note that an equal number of tokens can be removed from $(a,b)$ by Lemma \ref{U-lem}.) Moreover, for $k' \neq k$, removing $k'$ tokens from both piles is not a winning move. In fact, otherwise there is a move between the two $\P$-positions $(a-k,b-k)$ and $(a-k',b-k')$. (Note that $\lfloor b-k/a-k \rfloor = \lfloor b-k'/a-k' \rfloor = 1$ by the last equation of Lemma \ref{U-lem}.) It remains to show that removing $k$ tokens from both pile of $(a,b)$ is the unique winning move. Assume by contradiction that there exists another winning move from $(a,b)$. This move must take some $l$ tokens from the larger pile leading $(a,b)$ to some position $(\lfloor\phi r\rfloor + 1, \lfloor\phi r\rfloor + r + 1)$. First consider the case $b-l = \lfloor\phi r\rfloor + 1$, $a = \lfloor\phi r\rfloor + r + 1$. We have shown the existence of $m$ such that $\lfloor\phi m\rfloor = \lfloor\phi n\rfloor + k = a - 1$ and so $\lfloor\phi m \rfloor = \lfloor\phi r\rfloor + r = \lfloor\phi^2 r\rfloor$ which contradicts Lemma \ref{Comp}. Now consider the case $b-l = \lfloor\phi r\rfloor + r + 1$, $a = \lfloor\phi r\rfloor + 1$. We have
\begin{align*}
\begin{cases}
a = \lfloor\phi n\rfloor + k + 1 = \lfloor\phi r\rfloor + 1,\\
b-l = \lfloor\phi n\rfloor + n + 1 + k - l = \lfloor\phi r\rfloor + r + 1.
\end{cases}
\end{align*}
The first equation implies $n < r$. By substituting $\lfloor\phi n\rfloor + k$ from the first equation into the second one, we get $\lfloor\phi r\rfloor + n - l = \lfloor\phi r\rfloor + r$ which implies $n = l+r > r$ giving a contradiction. Therefore, this case is impossible.
\end{proof}

\smallskip
\begin{theorem} \label{REF-1}
The positions which have value 1 of $\F_{\R}$-Wythoff $($and $\F_{\E}$-Wythoff$)$ are identical to those of $\F$-Wythoff.
\end{theorem}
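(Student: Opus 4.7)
The plan is to verify the three standard characterizing conditions for the value-$1$ set once the $\P$-positions are known: (i) $\B \cap \P = \emptyset$, (ii) no legal move connects two positions in $\B$, and (iii) from every position outside $\B \cup \P$ there is a legal move into $\B$. By Theorem \ref{REF-P}, $\P$ is the same set as for $\F$-Wythoff, so (i) is immediate from the first part of the proof of Theorem \ref{V1}. Because the move set of $\F_{\R}$-Wythoff is a subset of that of $\F$-Wythoff, (ii) is inherited for $\F_{\R}$-Wythoff and only (iii) needs a new argument; symmetrically, the move set of $\F_{\E}$-Wythoff is a superset, so (iii) is inherited and only (ii) needs a new argument.

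For the $\F_{\R}$-Wythoff verification of (iii), I would take $(a,b) \notin \B \cup \P$ with $a \leq b$ and argue by cases. If $a \leq 1$, a single large-pile reduction reaches $(0,1) \in \B$. If $a \geq 2$, Corollary \ref{Comp.1} places $a-2$ in exactly one of the Beatty sets. When $a = \lfloor \phi^2 n \rfloor + 2$, reducing $b$ to $\lfloor \phi n \rfloor + 2$ (a larger-pile move, valid because $b \geq a > \lfloor \phi n \rfloor + 2$) lands in $\B$. When $a = \lfloor \phi n \rfloor + 2$ and $b \geq \lfloor \phi^2 n \rfloor + 2$, reducing $b$ to $\lfloor \phi^2 n \rfloor + 2$ works. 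The delicate subcase is $a = \lfloor \phi n \rfloor + 2$ with $b < \lfloor \phi^2 n \rfloor + 2$: setting $k = b - a$ gives $0 \leq k < n$, and the equal removal of $m = \lfloor \phi n \rfloor - \lfloor \phi k \rfloor \geq 1$ tokens produces $(\lfloor \phi k \rfloor + 2, \lfloor \phi^2 k \rfloor + 2) \in \B$, with the ratio condition preserved by Lemma \ref{U-lem}.

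For the $\F_{\E}$-Wythoff verification of (ii), I would rule out the extra asymmetric move --- removing $k$ tokens from the smaller pile and $l$ tokens from the larger with $l < k$ and the ratio unchanged --- connecting two elements of $\B$. Between two Beatty-form elements with parameters $m < n$, one computes $k = \lfloor \phi n \rfloor - \lfloor \phi m \rfloor$ and $l = \lfloor \phi^2 n \rfloor - \lfloor \phi^2 m \rfloor = k + (n-m) > k$, violating $l \leq k$. Between a Beatty-form element and $(0,1)$, the constraint $l \leq k$ forces $n \leq 1$; the case $n = 1$ is an equal removal already ruled out for $\F$-Wythoff in the proof of Theorem \ref{V1}, and the case $n = 0$, giving $(2,2) \to (0,1)$, drives the smaller pile to $0$, leaving the post-move ratio undefined so the ratio condition fails.

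The main obstacle is the delicate subcase in the $\F_{\R}$ argument: the Wythoff winning strategy from $(a-2, b-2)$ in that subcase naturally reduces the smaller pile, which is exactly the move $\F_{\R}$-Wythoff forbids, so an equal-removal substitute must be constructed by hand. The strict inequality $k = b - a < n$, which comes from $b \leq \lfloor \phi^2 n \rfloor + 1 = a + n - 1$, is what guarantees that $m = \lfloor \phi n \rfloor - \lfloor \phi k \rfloor$ is strictly positive and hence that the substitute move is genuine rather than vacuous.
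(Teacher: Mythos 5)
Your proposal is correct and takes exactly the route the paper intends: the paper gives no proof of this statement, saying only that it is ``quite similar'' to the proof of Theorem \ref{V1}, and your verification of (i)--(iii) (with the $\P$-positions supplied by Theorem \ref{REF-P}) is that argument, supplemented with precisely the new ingredients the modified move sets require --- the hand-built equal-removal substitute with $m=\lfloor\phi n\rfloor-\lfloor\phi k\rfloor$ in the $\F_{\R}$ case, and the check that the extra move with $l\le k$ cannot join two value-$1$ candidates in the $\F_{\E}$ case. The only detail you leave implicit is the ``crossed'' identification when ruling out an extension move between two Beatty-form positions, but since $l\le k$ forces the pre-move larger pile to remain the larger entry after the move, that case reduces at once to the computation you did (one gets $l-k=n+m>0$ there as well), so the gap is cosmetic.
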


One can check that those positions which have Sprague-Grundy value $i$, for $2 \leq i \leq 3$, in $\F_{\R}$-Wythoff  and those positions which have Sprague-Grundy value $i$, for $2 \leq j \leq 7$, in $\F_{\E}$-Wythoff can be obtained from $\P$-positions of $\F$-Wythoff by a translation, except for some first positions.

Consider the 2-dimension arrays of Sprague-Grundy values of $\F_{\R}$-Wythoff and $\F_{\E}$-Wythoff (as in Table \ref{T}). We have similar results to Theorem \ref{Row}.
\smallskip
\begin{theorem} \label{REF-1}
Let $a, g$ be nonnegative integer. For each of the two games $\F_{\R}$-Wythoff and $\F_{\E}$-Wythoff, there exists $b$ such that the position $(a,b)$ has Sprague-Grundy value $g$. The uniqueness holds for $\F_{\E}$-Wythoff.
\end{theorem}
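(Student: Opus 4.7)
The plan is to adapt the proof of Theorem~\ref{Row} to the modified rule sets of $\F_{\R}$-Wythoff and $\F_{\E}$-Wythoff, treating uniqueness and existence separately. Uniqueness in $\F_{\E}$-Wythoff is immediate: since every move of $\F$-Wythoff remains legal in $\F_{\E}$-Wythoff, single-pile reduction of either pile is available, and so if two distinct positions $(a,b_1)$ and $(a,b_2)$ with $b_1 < b_2$ had a common Sprague-Grundy value, the move reducing the $b_2$-pile to $b_1$ would exhibit $(a,b_1)$ as a follower of $(a,b_2)$, contradicting the definition of $mex$. I would point out that this argument breaks down in $\F_{\R}$-Wythoff precisely when $b_1 < b_2 < a$, since then both $b$-piles are the smaller pile and reductions of the smaller pile are forbidden; this is exactly why uniqueness is asserted only for the extension.

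For existence in both games, the plan is to imitate the mex-saturation plus pigeonhole argument from Theorem~\ref{Row}. After disposing of the trivial cases $a \leq 1$ and $g = 0$ (the latter via Theorem~\ref{REF-P} and Corollary~\ref{Comp.1}), assume for contradiction that $g \geq 1$ is the smallest value missing from row $a \geq 2$. Pick the smallest $b_0$ with $\{0,1,\ldots,g-1\} \subseteq \{\G(a,i) : 0 \leq i \leq b_0-1\}$ and set $b_s = b_0 + s(a+1)$ for $s \geq 1$, ensuring $b_s > a$. In either game every reduction $(a,b_s) \to (a,b')$ with $b' < b_s$ is admissible, so the $mex$ set at $(a,b_s)$ contains $\{0,1,\ldots,g-1\}$; combined with $\G(a,b_s) \neq g$ this forces $\G(a,b_s) > g$, and hence some non-row move from $(a,b_s)$ must reach value $g$.

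In $\F_{\R}$-Wythoff the only such non-row moves are the ratio-preserving equal removals, whose parameter lies in $\{1,\ldots,a\}$; in $\F_{\E}$-Wythoff the options also include single-pile reductions of the $a$-pile and the new asymmetric moves, but their parameters still lie in a finite set whose size depends only on $a$. Since $s$ ranges over infinitely many values, pigeonhole yields two indices $s_1 < s_2$ with a common move parameter, producing two distinct positions in a shared row with common Sprague-Grundy value $g$. Because $b_s > a$ and the removal from the $b$-pile is no more than the removal from the $a$-pile, the second coordinate of each such position strictly exceeds its first; reducing the larger pile then provides a direct move between the two and delivers the contradiction. The main obstacle is precisely this final step for $\F_{\R}$-Wythoff: one must verify that the closing move remains legal despite the ban on reducing the smaller pile, and the offset $a+1$ in the choice of $b_s$ is what guarantees the relevant inequality $b_{s_i} - l > a - l$ for every admissible parameter $l$.
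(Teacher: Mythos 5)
Your proposal is correct and follows exactly the route the paper intends: the paper leaves this proof to the reader as "quite similar" to the $\F$-Wythoff arguments, and your adaptation of the mex-saturation and pigeonhole proof of Theorem~\ref{Row} (with the observation that single-pile removals from the larger pile keep the row moves available in $\F_{\R}$-Wythoff, and that the closing move between the two pigeonholed positions reduces the larger pile, hence stays legal) is precisely that adaptation. Your separate treatment of uniqueness, noting it follows in $\F_{\E}$-Wythoff from unrestricted single-pile removals and breaks down in $\F_{\R}$-Wythoff only when both second coordinates are below $a$, is also consistent with the statement.
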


We end this section with a result on the strong miserability of the two variants.

\smallskip
\begin{theorem} \label{Str.Mis-RE}
The two games $\F_{\R}$-Wythoff and $\F_{\R}$-Wythoff are both strongly miserable.
\end{theorem}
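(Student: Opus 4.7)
The plan is to mirror the two-stage argument used for Theorem~\ref{Str.Mis}. In the first stage, for each of $\F_{\R}$-Wythoff and $\F_{\E}$-Wythoff, I would establish analogs of Theorems~\ref{P-Mis} and~\ref{1-Mis}: namely, that the misère $\P$-positions are $\{(0,1)\}\cup\{(\lfloor\phi n\rfloor+2,\lfloor\phi^2 n\rfloor+2)\mid n\geq 0\}$ and the misère Sprague-Grundy value-$1$ positions are $\{(0,0)\}\cup\{(\lfloor\phi n\rfloor+1,\lfloor\phi^2 n\rfloor+1)\mid n\geq 0\}$, so the two lowest Sprague-Grundy classes swap with respect to normal play. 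Each claim is verified by the standard three-step check: disjointness of the two candidate sets, no move between two members of the same set, and existence of a move from every outside position into the relevant set. The second stage is then essentially game-agnostic once the swap at values $0$ and $1$ is in place.

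For the first stage in $\F_{\R}$-Wythoff the delicate direction is the ``move in'' part, since the prohibition against reducing the smaller pile could kill the move that worked for $\F$-Wythoff. Inspection of the proof of Theorem~\ref{FW-P} shows that the move it constructs from $(a,b)$ always either removes tokens from the larger pile alone or removes the same number from both piles (under the integer-ratio constraint guaranteed by Lemma~\ref{U-lem}), both of which remain legal in $\F_{\R}$-Wythoff; the same remark applies verbatim to the construction in Theorem~\ref{V1}. For $\F_{\E}$-Wythoff, by contrast, the delicate direction is ``no move within the set'', because the extra asymmetric move---remove $k$ tokens from the smaller pile and $\ell<k$ from the larger while keeping the integer ratio unchanged---must be ruled out between any two positions of the form $(\lfloor\phi n\rfloor+c,\lfloor\phi^2 n\rfloor+c)$ with $c\in\{1,2\}$. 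Subtracting $c$ from both coordinates reduces this to showing that no such move connects two Wythoff $\P$-positions, which is in the spirit of Theorem~\ref{REF-P}; the verification uses Lemma~\ref{U-lem} to track the integer ratio and Lemma~\ref{Comp} to exclude coincidences between the two Beatty sequences $\{\lfloor\phi n\rfloor\}$ and $\{\lfloor\phi^2 n\rfloor\}$.

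The second stage is a direct copy of the height-induction in the proof of Theorem~\ref{Str.Mis}: let $h(v)$ be the longest path from $v$ to the misère sink, verify the small base cases by hand, and assume inductively that $\G^-(w)=\G(w)$ whenever $\G^-(w)\geq 2$ and $h(w)<h(v)$. Using that every move in the normal game is a move in the misère game (and conversely except for the added edges to the sink), the first-stage swap upgrades the inductive hypothesis to the statement that the set of normal Sprague-Grundy values of the followers of $v$ contains $\{0,1,\ldots,\G^-(v)-1\}$ but excludes $\G^-(v)$, so $\G(v)=\G^-(v)$. This argument is identical for $\F_{\R}$-Wythoff and $\F_{\E}$-Wythoff.

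The main obstacle is the case analysis for $\F_{\E}$-Wythoff in the first stage, specifically showing that the asymmetric ratio-preserving move cannot connect two members of either candidate set. Once that Diophantine verification is carried out using the Beatty-sequence lemmas, the rest of the proof is routine.
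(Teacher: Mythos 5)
Your plan is exactly the route the paper intends: the paper omits this proof, stating only that it is "quite similar" to the $\F$-Wythoff case, i.e., establish the analogues of Theorems~\ref{P-Mis} and~\ref{1-Mis} for $\F_{\R}$-Wythoff and $\F_{\E}$-Wythoff and then rerun the height induction of Theorem~\ref{Str.Mis}, which is precisely your two stages (and you correctly flag the only delicate points: legality of the "move in" under the $\F_{\R}$ restriction, and excluding the extra asymmetric move between members of the candidate sets for $\F_{\E}$, both handled via Lemmas~\ref{Comp} and~\ref{U-lem}). No gap beyond the routine case checking the paper itself leaves to the reader.
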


\section{More open questions}
Based on the translation phenomenon discussed above, we raise in this section two general questions on variants of Wythoff's game.

\smallskip
\begin{question}
Does there exist another variant of Wythoff's game whose $\P$-positions, possibly except for a finite number of positions, accept the formula $(\lfloor \phi n \rfloor+m, \lfloor \phi^2n \rfloor +m)$ for some $m \geq 2$.
\end{question}

More generally,
\smallskip
\begin{question}
Does there exist another variant of Wythoff's game whose $\P$-positions, possibly except for a finite number of positions, accept the formula $(\lfloor \phi n \rfloor+a, \lfloor \phi^2n \rfloor +b)$ for some integers $a$ and $b$.
\end{question}

\smallskip
\begin{ack}
I thank Graham Farr at Monash University for his suggestion of the game and for his communication which improved the paper.
\end{ack}

\small


\begin{thebibliography}{19}

\bibitem{beatty1} S. Beatty, A. Ostrowski, J. Hyslop, A.C. Aitken, Solution to problem 3173, Amer. Math. Monthly 34 (1927) 159--160.

\bibitem{ww1} E.R. Berlekamp, J.H. Conway, and R.K. Guy, Winning ways for your mathematical plays. Vol. 1, second ed., A K Peters Ltd., Natick, MA, 2001.

\bibitem{blass} U. Blass, A.S. Fraenkel, The {S}prague-{G}rundy function for {W}ythoff's game, Theoret. Comput. Sci. 75 (1990) 311--333.

\bibitem{MEuclid} G. Cairns, N.B. Ho, A restriction of the game Euclid, submitted, arXiv:1202.4597v1.

\bibitem{Min} G. Cairns, N.B. Ho, Min, a combinatorial game having a connection with prime numbers, Integers 10 (2010), G03, 765--770.

\bibitem{CHL} G. Cairns, N.B. Ho, T. Lengyel, The Sprague-Grundy function of the real game Euclid, Discrete Math. 311 (2011), 457--462.

\bibitem{Gen-Connell} I.G. Connell, A generalization of {W}ythoff's game, Canad. Math. Bull. 2 (1959) 181--190.

\bibitem{Dress} A. Dress, A. Flammenkamp, N. Pink, Additive periodicity of the {S}prague-{G}rundy function of certain {N}im games, Adv. in Appl. Math. 22 (1999) 249--270.

\bibitem{Nim-Wythoff} E. Duch{\^e}ne, A.S. Fraenkel, S. Gravier, R.J. Nowakowski, Another bridge between NIM and WYTHOFF, Australas. J. Combin. 44 (2009) 43--56.

\bibitem{Ext-Res} E. Duch{\^e}ne,  A.S. Fraenkel, R.J. Nowakowski, M. Rigo, Extensions and restrictions of {W}ythoff's game preserving its $\P$-positions, J. Combin. Theory Ser. A 117 (2010) 545--567.

\bibitem{Geo-ext} E. Duch{\^e}ne, S. Gravier, Geometrical extensions of Wythoff's game, Discrete Math. 309 (2009) 3595--3608.

\bibitem{Heapgame} A.S. Fraenkel, Heap games, numeration systems and sequences, Ann. Comb. 2 (1998) 197--210.

\bibitem{Howtobeat} A.S. Fraenkel, How to beat your Wythoff games' opponent on three fronts, Amer. Math. Monthly 89 (1982)  353--361.

\bibitem{Wyt-mis} A.S. Fraenkel,
 Wythoff games, continued fractions, cedar trees and Fibonacci searches. Theoret. Comput. Sci. 29 (1984), 49-–73,

\bibitem{Gen-Fra} A.S. Fraenkel, I. Borosh, A generalization of Wythoff's game, J. Combinatorial Theory Ser. A 15 (1973) 175--191.

\bibitem{Nimhoff} A.S. Fraenkel, M. Lorberbom, Nimhoff games, J. Combin. Theory Ser. A 58 (1991) 1--25.

\bibitem{Adjoining} A.S. Fraenkel, M. Ozery, Adjoining to Wythoff's game its $\P$-positions as moves, Theoret. Comput. Sci. 205 (1998) 283--296.

\bibitem{End-Wyt} A.S. Fraenkel, E. Reisner, The game of End-Wythoff, in: M.H. Albert, R.J. Nowakowski (Eds.), Games of No Chance III, Cambridge University Press, Cambridge, 2009, pp. 329--347.

\bibitem{anew} A.S. Fraenkel, D. Zusman, A new heap game, Theoret. Comput. Sci. 252 (2001) 5--12.

\bibitem{RRR0} V. Gurvich, Further Generalizations of the Wythoff's Game and Minimum Excludant, RUTCOR Research Report 16-2010 and 12-2011, Rutgers University; Discrete Appl. Math., to appear.

\bibitem{RRR} V.A. Gurvich,  Miserable and strongly miserable impartial games, RUTCOR Research Report, 18-2011, Rutgers University.

\bibitem{ho} N.B. Ho, Two variants of Wythoff's game preserving its $\P$-positions, J. Combin. Theory Ser. A (2012), http://dx.doi.org/10.1016/j.jcta.2012.03.010.

\bibitem{hog} V.E. Hoggatt Jr., M. Bicknell-Johnson, R. Sarsfield, A generalization of Wythoff's game, Fibonacci Quart. 17 (1979) 198--211.

\bibitem{Some-Gen} J.C. Holladay, Some Generalizations of {W}ythoff's Game and Other Related Games, Math. Mag. 41 (1968)  7--13.

\bibitem{landman} H. Landman, A simple FSM-based proof of the additive periodicity of the {S}prague-{G}rundy function of Wythoff's game, in: R.J. Nowakowski (Ed.), More Games of No Chance, Cambridge University Press,
Cambridge, 2002, pp. 383–-386.

\bibitem{nivasch} G. Nivasch, More on the {S}prague-{G}rundy function for Wythoff's game, in: M.H. Albert, R.J. Nowakowski (Eds.), Games of No Chance III, Cambridge University Press, Cambridge, 2009, pp. 377–-410.

\bibitem{Wyt} W.A. Wythoff, A modification of the game of {N}im, Nieuw Arch. Wiskd. 7 (1907) 199--202.

 \end{thebibliography}
\end{document}